\DeclareSymbolFont{SY}{U}{psy}{m}{n}
\DeclareMathSymbol{\emptyset}{\mathord}{SY}{'306}
\theoremstyle{plain}
\newtheorem{thm}{Theorem}[section]
\newtheorem{cor}[thm]{Corollary}
\newtheorem{lem}[thm]{Lemma}
\newtheorem{prop}[thm]{Proposition}
\theoremstyle{definition}
\newtheorem{defn}[thm]{Definition}
\newtheorem{rem}[thm]{Remark}
\newtheorem{ex}[thm]{Example}
\numberwithin{equation}{section}
\def\C{{\mathbb C}}
\def\N{\mathbb{N}}
\def\ra{\rightarrow}
\def\ov{\overline}
\def\a{\alpha}
\def\g{\gamma}
\def\d{\sum}
\def\wi{\widetilde}
\def\beq{\begin{eqnarray}}
\def\eeq{\end{eqnarray}}
\def\beqa{\begin{eqnarray*}}
\def\eeqa{\end{eqnarray*}}
\def\wt{\widetilde}
\def\T{\boldsymbol T}
\def\M{\boldsymbol M}
\begin{document}
\title[Infinite divisibility and curvature inequality]{Infinitely divisible
metrics and curvature inequalities\\ for operators in the Cowen-Douglas class}
\author[Biswas]{Shibananda Biswas}
\address[Biswas]{Department of Mathematics, Ben Gurion University of the Negev,
Be'er Sheva 84105, Israel}
\email{shibananda@gmail.com}
\author[Keshari]{Dinesh Kumar Keshari}
\address[Keshari]{Department of Mathematics, Indian Institute of Science,
Bangalore 560012, India} \email{kesharideepak@gmail.com}
\author[Misra]{Gadadhar Misra}
\address[Misra]{Department of Mathematics, Indian Institute of Science,
Bangalore 560012, India}
\email{gm@math.iisc.ernet.in}
\thanks{The work of S. Biswas was supported by a postdoctoral fellowship
funded by the Skirball Foundation via the Center for Advanced
Studies in Mathematics at Ben-Gurion University of the Negev. The
work of D.K. Keshari was supported in the form of a CSIR Research
Fellowship at the Indian Institute of Science. The work of G.
Misra was  supported in part by UGC - SAP and by a grant from the
Department of Science and Technology.} \subjclass[2000]{47B32,
47B35} \keywords{Cowen-Douglas class, curvature inequality,
holomorphic Hermitian vector bundle, kernel function, infinite
divisibility}
\begin{abstract}
The curvature $\mathcal K_T(w)$ of a contraction $T$ in the Cowen-Douglas
class $B_1(\mathbb D)$ is bounded above by the curvature $\mathcal K_{S^*}(w)$
of the backward shift operator. However, in general, an operator satisfying the
curvature inequality need not be contractive. In this note, we characterize a
slightly smaller class of contractions using a stronger form of the curvature
inequality.  Along the way, we find conditions on the
metric of the holomorphic Hermitian vector bundle $E_T$ corresponding to the
operator
$T$ in the Cowen-Douglas class $B_1(\mathbb D)$ which ensures negative definiteness of
the curvature function. We obtain a generalization for commuting tuples of
operators in the class $B_1(\Omega)$, for a bounded domain $\Omega$ in
$\mathbb C^m$.
\end{abstract}

\maketitle
\section{Introduction}
Let $\mathcal H$ be a complex separable Hilbert space and $\mathcal
L(\mathcal H)$ denote the collection of bounded linear operators on
$\mathcal H$. The following important class of operators was introduced
in \cite{cd}.

\begin{defn}
For a connected open subset $\Omega$ of $\mathbb C$ and a
positive
integer $n$, let
\begin{eqnarray*}
B_n(\Omega)  =  &\big \{& T\in\mathcal L(\mathcal H)\,|\,\,
\Omega\subset\sigma(T),\\
&& {\mathrm{ran}}\,(T-w)= \mathcal H\mbox{ for }w\in\Omega, \\
&&\bigvee_{w\in\Omega}\ker(T-w)= \mathcal H,\\
&&\dim~\ker(T-w)= n\mbox{ for } w\in\Omega\,\,  \big \},
\end{eqnarray*}
where $\sigma(T)$ denotes the spectrum of the operator $T$.
\end{defn}

We recall (cf. \cite{cd}) that an operator $T$ in the class $B_n(\Omega)$
defines a holomorphic Hermitian vector bundle $E_T$ in a natural manner. It is
the sub-bundle of the trivial bundle $\Omega\times\mathcal H$ defined by
$$E_T= \{(w, x)\in\Omega\times\mathcal H: x\in \ker(T-w)\}$$
with the natural projection map $\pi:E_T\to \Omega$,  $\pi(w, x)=
w$. It is shown in \cite[Proposition 1.12]{cd} that the mapping
$w\longrightarrow \ker(T-w)$ defines a rank $n$  holomorphic
Hermitian vector bundle $E_T$ over $\Omega$ for $T\in
B_n(\Omega)$. In \cite{cd}, it was also shown that the equivalence
class of the holomorphic Hermitian vector bundle $E_T$ and the
unitary equivalence class of the operator $T$ determine each
other.

\begin{thm} \label{eq}
The operators $T$ and $\wi T$ in $B_n(\Omega)$ are unitarily equivalent
if and only if the corresponding holomorphic Hermitian vector bundles $E_T$ and
$E_{\wi T}$ are equivalent.
\end{thm}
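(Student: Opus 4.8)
The plan is to prove the two implications separately, with essentially all of the content residing in the converse direction. Suppose first that $T$ and $\wi T$ are unitarily equivalent, so there is a unitary $U\colon \mathcal H\to\wi{\mathcal H}$ with $U T=\wi T U$. Then $U$ carries $\ker(T-w)$ isometrically onto $\ker(\wi T-w)$ for every $w\in\Omega$, and since $U$ is a fixed operator the fibrewise map $(w,x)\mapsto(w,Ux)$ is a holomorphic bundle isomorphism $E_T\to E_{\wi T}$ that preserves the Hermitian metric inherited from the trivial bundle. Hence $E_T$ and $E_{\wi T}$ are equivalent as holomorphic Hermitian vector bundles.

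For the converse, suppose $\Phi\colon E_T\to E_{\wi T}$ is an isometric holomorphic bundle isomorphism. Rather than work in a local frame, it is cleaner to argue with global holomorphic sections: if $\gamma$ is a holomorphic section of $E_T$, then $\gamma(w)\in\ker(T-w)\subset\mathcal H$, and $\Phi\circ\gamma$ is a holomorphic section of $E_{\wi T}$. The aim is to show that the assignment $\gamma(w)\mapsto (\Phi\circ\gamma)(w)$ extends to a unitary $U$ intertwining $T$ and $\wi T$. Because $\bigvee_{w\in\Omega}\ker(T-w)=\mathcal H$, the values $\{\gamma(w)\}$ of holomorphic sections span a dense subspace, so such a $U$, once shown to be a well-defined isometry on this span, extends by continuity to all of $\mathcal H$.

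The crux is well-definedness, which amounts to showing that the entire Gram structure is preserved, not merely the fibrewise one. The hypothesis that $\Phi$ is isometric gives only $\langle\gamma(w),\delta(w)\rangle_{\mathcal H}=\langle(\Phi\gamma)(w),(\Phi\delta)(w)\rangle_{\wi{\mathcal H}}$ on the diagonal $w=w'$, for holomorphic sections $\gamma,\delta$. To upgrade this, I would observe that $(w,w')\mapsto\langle\gamma(w),\delta(w')\rangle$ is holomorphic in $w$ and anti-holomorphic in $w'$, as is the corresponding expression for $\Phi\gamma,\Phi\delta$. Regarding $w$ and $\ov{w}'$ as independent complex variables, two such functions that agree on the totally real diagonal $w'=w$ must agree identically by the identity theorem. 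Consequently $\langle\gamma(w),\delta(w')\rangle=\langle(\Phi\gamma)(w),(\Phi\delta)(w')\rangle$ for all $w,w'$, so every finite combination $\sum_j c_j\gamma_j(w_j)$ has the same norm as $\sum_j c_j(\Phi\gamma_j)(w_j)$. In particular the map $U$ is well defined and isometric on the spanning set, and extends to a unitary $\mathcal H\to\wi{\mathcal H}$.

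It remains to verify the intertwining $U T=\wi T U$. For a holomorphic section $\gamma$ one has $T\gamma(w)=w\gamma(w)$ and, since $(\Phi\gamma)(w)\in\ker(\wi T-w)$, also $\wi T(\Phi\gamma)(w)=w(\Phi\gamma)(w)$. Therefore $U T\gamma(w)=w\,U\gamma(w)=\wi T U\gamma(w)$, so $UT$ and $\wi T U$ agree on the dense spanning set and hence everywhere. I expect the main obstacle to be precisely the passage from the diagonal identity to the full Gram identity; once the analytic-continuation argument above is in place, well-definedness, the spanning density, and the intertwining are routine.
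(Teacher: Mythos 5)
Your proof is correct. Note that the paper itself offers no proof of Theorem \ref{eq}: it is recalled from Cowen and Douglas \cite{cd}, and the rigidity argument there is essentially the one you give --- polarization (a function holomorphic in $w$ and anti-holomorphic in $w'$ is determined by its values on the diagonal) upgrades the fibrewise isometry to the full Gram identity $\langle\gamma(w),\delta(w')\rangle=\langle(\Phi\gamma)(w),(\Phi\delta)(w')\rangle$, after which the unitary is defined on the dense span of the fibres and the intertwining follows from the eigenvector relations $T\gamma(w)=w\gamma(w)$, $\wi T(\Phi\gamma)(w)=w(\Phi\gamma)(w)$. The only loose ends are routine: the existence of enough global holomorphic sections rests on Grauert's theorem (which the paper invokes for exactly this purpose), or one can work with local sections since every fibre vector is the value of a local holomorphic section and the fibres already span $\mathcal H$; and to get unitarity of $U$, rather than mere isometry, one should remark that $\Phi$ is an isomorphism, so the range of $U$ contains the span of the fibres of $E_{\wi T}$, which is dense.
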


In general, it is not easy to decide if two holomorphic Hermitian vector
bundles are equivalent except when the rank of the bundle is $1$. In this
case, the curvature
\begin{eqnarray*}\mathcal
K(w)= - \frac{~\partial^2\log\parallel{\gamma(w)}\parallel^2}{\partial{w}
\partial{\overline{w}}},
\end{eqnarray*}
of the line bundle $E$, defined with respect to a non-zero holomorphic section
$\g$ of $E$, is a complete invariant. The definition of the curvature is
independent of the choice of the section $\g$:  If $\g_0$ is another holomorphic
section of $E$, then
$\g_0=\phi\g$ for some non-zero holomorphic function $\phi$ defined on an
open subset $\Omega_0$ of $\Omega$, consequently the harmonicity of
$\log\,|\phi|$, completes the verification.

Let $T\in B_1(\Omega)$. Fix $w\in\Omega$ and let $\g$ be a
holomorphic section of the line bundle $E_T$. From \cite[Lemma
1.22]{cd}, it follows that the vectors $\g(w)$ and $\partial\g(w)$
from a basis of $\ker(T-w)^2$. Let $N_{T}(w) = T|_{\ker(T-w)^2}$
and $\{\g_1(w), \g_2(w)\}$ be the basis obtained by applying
Gram-Schmidt ortho-normalization to the vectors $\g(w)$ and
$\partial\g(w)$. The linear transformation $N_{T}(w)$ has the
matrix representation
$$
N_T(w) = \begin{pmatrix}
w & h_T(w)\\
0 & w
\end{pmatrix},
$$
where $h_T(w) = \big(-\mathcal K_{T}(w)\big)^{-\frac{1}{2}}$, with respect to
the orthonormal basis $\{\g_1(w),\g_2(w)\}$.

The curvature $\mathcal K_{T}(w)$ of an operator $T$ in $B_1(\Omega)$ is
negative. To see this, recall that  the curvature
may also be expressed (cf.\cite[page - 195]{cd}) in the form
\beq\label{pos1} \mathcal K_{T}(w) =
-\frac{\|\gamma(w)\|^2\|\gamma^\prime(w)\|^2 - |\langle
\gamma^\prime(w), \gamma(w)\rangle|^2}{\|\gamma(w)\|^4}. \eeq
Applying Cauchy - Schwarz inequality, we see that the numerator is positive.

Let $\{e_0,e_1\}$ be an orthonormal set of vectors. Suppose $N$ is a nilpotent
linear transformation defined by the rule
$$e_1 \to a\, e_0,\, e_0  \to 0,\,\, a\in \mathbb C.$$
Then $|a|$ determines the unitary equivalence class of $N$.

The localization $N_T(w)-wI_2= \Big (\begin{smallmatrix} 0 &
h_T(w)\\0 & 0 \end{smallmatrix} \Big )$ of the operator $T$ in
$B_1(\Omega)$ is nilpotent. Now, $h_T(w) > 0$ since we have shown that the
curvature $\mathcal K_T(w)$ is negative. Hence the curvature $\mathcal
K_T(w)$ is an invariant for the operator $T$.
The non-trivial converse of this statement follows from Theorem \ref{eq}.
Thus the operators $T$ and $\wi T$ in $B_1(\Omega)$ are unitarily equivalent if
and only if $N_{T}(w)$ is unitarily equivalent to $ N_{\wi T}(w)$ for $w$ in
$\Omega$.

Note that if $T\in B_1(\mathbb{D})$ is a contraction, that is,
$\|T\|\leq 1$, then $N_{T}(w)$ is a contraction for each
$w\in\mathbb D$. Observe that $\left ( \begin{smallmatrix}
a & c\\
0 & b
\end{smallmatrix} \right )$ is a contraction if and only if
$
|a|\leq 1 \mbox{~and~} |c|^2\leq (1 - |a|^2)(1 - |b|^2).
$
Thus $\|N_{T}(w)\|\leq 1$ if and only if
$\mathcal K_{T}(w)\leq -\frac{1}{(1 - |w|^2)^2},\,\,w\in
\mathbb{D}.$
The adjoint $S^*$ of the unilateral shift  operator $S$ is in $B_1(\mathbb D)$.
It is easy to see that $\g_{S^*}(w) = (1,w,\ldots, w^n,\ldots) \in
\ell^2_+,\,w\in \mathbb D$, is a holomorphic section for the corresponding
holomorphic Hermitian line bundle $E_{S^*}$. The norm $\|\g_{S^*}(w)\|^2$ of the
section $\g_{S^*}$ is $(1-|w|^2)^{-1}$ and hence
the curvature $\mathcal K_{S^*}(w)$ of the operator $S^*$ is given by the
formula
$-\frac{1}{(1-|w|^2)^{2}},\,w\in \mathbb D.$ We have therefore proved:
\begin{prop}
If $T$ is a contractive operator in $B_1(\mathbb D)$, then the curvature of $T$
is bounded above by the curvature of the backward shift operator $S^*$.
\end{prop}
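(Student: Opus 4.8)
The plan is to translate the contractivity of $T$ into the pointwise curvature bound by passing to the localization $N_T(w)$ introduced above. The essential observation is that the two quantities appearing in the statement have already been computed explicitly: the curvature of $T$ governs the off-diagonal entry $h_T(w) = (-\mathcal K_T(w))^{-1/2}$ of $N_T(w)$, while the curvature of the backward shift is $\mathcal K_{S^*}(w) = -(1-|w|^2)^{-2}$. Thus it suffices to show that $\|N_T(w)\| \le 1$ forces $h_T(w)^2 \le (1-|w|^2)^2$ and then to unwind the definitions.

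First I would verify that $N_T(w)$ is genuinely a contraction. The subspace $\ker(T-w)^2$ is invariant under $T$, since $(T-w)^2 x = 0$ implies $(T-w)^2(Tx) = T(T-w)^2 x = 0$. Hence $N_T(w) = T|_{\ker(T-w)^2}$ is the restriction of $T$ to a $T$-invariant subspace, and the restriction of a contraction to an invariant subspace is again a contraction: for $x$ in this subspace, $\|N_T(w) x\| = \|Tx\| \le \|T\|\,\|x\| \le \|x\|$. Because the matrix representation of $N_T(w)$ is taken with respect to the orthonormal basis $\{\gamma_1(w),\gamma_2(w)\}$, the operator norm of $N_T(w)$ coincides with the norm of its matrix, so $\|N_T(w)\| \le 1$.

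Next I would apply the elementary contractivity criterion recorded in the excerpt. Writing
\[
N_T(w) = \begin{pmatrix} w & h_T(w) \\ 0 & w \end{pmatrix},
\]
the criterion for an upper-triangular $2\times 2$ matrix gives $|h_T(w)|^2 \le (1-|w|^2)(1-|w|^2) = (1-|w|^2)^2$, the diagonal condition $|w| \le 1$ being automatic on $\mathbb D$. Since $h_T(w) > 0$, substituting $h_T(w)^2 = (-\mathcal K_T(w))^{-1}$ and inverting both (positive) sides yields $-\mathcal K_T(w) \ge (1-|w|^2)^{-2}$, that is,
\[
\mathcal K_T(w) \le -\frac{1}{(1-|w|^2)^2} = \mathcal K_{S^*}(w), \qquad w \in \mathbb D,
\]
which is the asserted inequality.

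I do not expect a genuine obstacle once these pieces are assembled; the only point deserving care is the first step, namely confirming that $\ker(T-w)^2$ is $T$-invariant so that $N_T(w)$ really is a restriction of a contraction rather than merely an abstractly defined linear map, and noting that working in an orthonormal basis lets us identify the matrix norm with the operator norm. Everything else amounts to substituting the known formulas for $h_T$ and $\mathcal K_{S^*}$.
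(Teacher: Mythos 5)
Your proof is correct and follows essentially the same route as the paper: contractivity of $T$ passes to the localization $N_T(w)=T|_{\ker(T-w)^2}$, the $2\times 2$ upper-triangular contraction criterion yields $h_T(w)^2\le (1-|w|^2)^2$, and substituting $h_T(w)=(-\mathcal K_T(w))^{-1/2}$ together with $\mathcal K_{S^*}(w)=-(1-|w|^2)^{-2}$ gives the inequality. Your only addition is to make explicit the step the paper leaves unstated, namely that $\ker(T-w)^2$ is $T$-invariant so that $N_T(w)$ is genuinely the restriction of a contraction; this is a worthwhile but minor elaboration, not a different argument.
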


We think of the operator $S^*$ as an extremal operator within the
class of contractions in $B_1(\mathbb D)$. This is a special
case of the curvature inequality
proved in \cite{gm01}.
The curvature inequality is equivalent to contractivity of the operators
$N_T(w)$, $w\in \Omega$, while the contractivity of the operator $T$ is  global
in nature. So, it is natural to expect that the validity of the inequality
$\mathcal K_{T}(w)\leq -\frac{1}{(1 - |w|^2)^2}$, $w\in\mathbb D$,  need not
force
$T$ to be a contraction. Indeed, J. Agler had communicated
the existence of an operator $T$, $\|T\| >1$,  in $B_1(\mathbb D)$  with
$\mathcal K_T(w) \leq \mathcal K_{S^*}(w)$ (cf. \cite[Note added in
proof]{gm01}) to G. Misra. Unfortunately, there is
a printing error in \cite[Note added in proof]{gm01}, which should be corrected
by reversing the inequality sign. No explicit example has been written down of
this phenomenon. We provide such an example here.

The main point of this note is to investigate additional conditions on the
curvature, apart from the inequality we have discussed above, which will ensure
contractivity. We give an alternative proof the curvature inequality.
A stronger inequality becomes apparent from this proof. It is this  stronger
inequality which, as we will show below, admits a converse.

An operator $T$ in the class $B_1(\Omega)$, as is well-known (cf.
\cite[pp. 194 ]{cd}), is unitarily equivalent to the adjoint $M^*$
of the multiplication operator $M$ by the co-ordinate function on
some Hilbert space $\mathcal H_K$ of holomorphic functions on
$\Omega^*:=\{z\in \mathbb C:\bar{z} \in \Omega\}$ possessing a
reproducing kernel $K$.

The kernel $K$ is a complex valued function defined on $\Omega^*\times\Omega^*$
which is holomorphic in the first variable and anti-holomorphic in the second.
In consequence, the map $\bar{w} \to K(\cdot, w)$, $w\in \Omega^*$, is
holomorphic on $\Omega$.
We have $K(z,w) = \overline{K(w,z)}$ making it Hermitian. It is positive
definite in the sense that the $n\times n$ matrix
$$
\big (\!\big (\,K(w_i,w_j)\big )\!\big)_{i,j=1}^n
$$
is positive definite for every subset $\{w_1,\ldots ,w_n\}$ of $\Omega^*$, $n\in
\mathbb N$. Finally, the kernel $K$ reproduces the value of functions
in $\mathcal H_K$, that is, for any fixed $w\in \Omega^*$, the holomorphic
function  $K(\cdot,w)$ belongs to $\mathcal H_K$ and
$$
f(w) = \langle f , K(\cdot, w) \rangle, \,\, f\in \mathcal H_K,\,\, w \in
\Omega^*.
$$

The correspondence between the operator $T$ in $B_1(\Omega)$ and
the operator $M^*$ on the Hilbert space $\mathcal H_K$ is easy to
describe (cf. \cite[pp. 194 ]{cd}). Let $\g$ be a non-zero
holomorphic section (for bounded domain in $\mathbb C$, by Grauert's Theorem,
a global section exists) for the operator $T$ acting on the Hilbert
space $\mathcal H$. Consider the map $\Gamma:\mathcal H \ra
\mathcal O(\Omega^*)$, where $ \mathcal O(\Omega^*)$ is the space
of holomorphic functions on $\Omega^*$, defined by $\Gamma(x)(z) =
\langle x,\g(\bar z) \rangle,\, z\in\Omega^*$. Transplant the
inner product from $\mathcal H$ on the range of $\Gamma$.  The map
$\Gamma$  is now  unitary  from $\mathcal H$ onto the completion
of ${\rm ran}\,\Gamma$. Define $K$ to be the function $K(z,w) =
\Gamma\big (\g(\bar{w})\big )(z) = \langle \g(\bar{w}) ,
\g(\bar{z}) \rangle$, $z,w\in \Omega^*$. Set $K_w(\cdot):=
K(\cdot,w)$. Thus $K_w$ is the function $\Gamma \big
(\g(\bar{w})\big )$. It is then easily verified that $K$ has the
reproducing property, that is,
\begin{eqnarray*}
\langle \Gamma(x)(z), K(z,w) \rangle_{{\rm ran}\,\Gamma} &=&
\langle \big (\langle x, \g(\bar{z}) \rangle \big ) ,\big (\langle \g(\bar{w}) ,
\g(\bar{z}) \big ) \rangle_{{\rm ran}\, \Gamma}\\
&=& \langle \Gamma x, \Gamma(\g(\bar{w})) \rangle_{{\rm ran}\,
\Gamma} = \langle x , \g(\bar{w}) \rangle_\mathcal H \\&=&
\Gamma(x)(w),\,\, x\in \mathcal H,\,\, w\in \Omega^*.
\end{eqnarray*}
It follows that $\|K_w(\cdot)\|^2 = K(w,w)$, $w \in \Omega^*$. Also,
$K_w(\cdot)$ is an eigenvector for the operator $\Gamma \,T\,\Gamma^*$
with eigenvalue $\bar{w}$ in $\Omega$:
\begin{eqnarray*}
\Gamma \,T\, \Gamma^*( K_w(\cdot)) &=& \Gamma\, T\, \Gamma^* \big (
\Gamma(\g(\bar{w}))\big )\\
&=&\Gamma\, T \,\g(\bar{w})\\ &=& \Gamma\, \bar{w}\, \g(\bar{w})\\
&=& \bar{w}\, K_w(\cdot), \,\, w\in \Omega^*.
\end{eqnarray*}
Since the  linear span of the vectors $\{K_w :
w\in\Omega^*\}$ is dense in $\mathcal H_K$, it follows that $\Gamma \,T\,
\Gamma^*$ is the adjoint $M^*$ of the multiplication operator $M$ on $\mathcal
H_K$. We therefore assume, without loss
of generality, that an operator $T$ in $B_1(\Omega)$ has been realized as the
adjoint $M^*$ of the multiplication operator $M$ on some Hilbert space $\mathcal
H_K$ of holomorphic functions on $\Omega^*$ possessing a reproducing kernel
$K$.

\begin{rem}\label{second} The contractivity of the adjoint $M^*$ of the multiplication
operator $M$ on some reproducing kernel Hilbert space $\mathcal
H_K$ is equivalent to the requirement that ${K}^\ddag(z,w):=(1 -
z\bar{w})K(z,w)$ is positive definite on $\mathbb D$ (cf.
\cite[Corollary 2.37]{am01} and \cite[Lemma 1]{dms}). Suppose that
the operator $M^*$  is in $B_1(\mathbb D)$. Here is a second proof
of the curvature inequality:

We have \beqa \frac{~\partial^2}{\partial{w}\partial{\bar{w}}}
\log K(w,w) = \frac{~\partial^2}{\partial{w}\partial{\bar{w}}}
\log {\frac{1}{(1 - |w|^2)}} +
\frac{~\partial^2}{\partial{w}\partial{\bar{w}}}\log K^\ddag(w,w),
\, w\in \mathbb D, \eeqa which we rewrite as \beqa \mathcal
K_{M^*}(w) = \mathcal K_{S^*}(w) -
\frac{~\partial^2}{\partial{w}\partial{\bar{w}}}\log K^\ddag(w,w),
w\in\mathbb D. \eeqa Recalling that
$\frac{~\partial^2}{\partial{w}\partial{\bar{w}}}\log
K^\ddag(w,w)$ must be positive (see \eqref{pos1}) as long as
$K^\ddag$ is positive definite, we conclude that
$$
\mathcal K_{M^*}(w) \leq \mathcal K_{S^*}(w), \,\,w\in \mathbb D.
$$
\end{rem}

The fibre at $\bar w$ of the holomorphic bundle $E_{M^*}$ for $M^*$ in
$B_1(\Omega)$ is the one-dimensional kernel at $\bar w$ of the operator
$M^*$  spanned by $K_w(\cdot)$, $w\in \Omega^*$. In general, there is no obvious
way to define an  inner product between the two vectors
$K_w(\cdot)$ and $(\tfrac{\partial}{\partial\bar{w}} K_w)(\cdot)$.
However since these vectors belong to the same Hilbert space (cf.
\cite[Lemma 4.3]{cs}), in our special case, there is a natural
inner product defined between them. This ensures, via the
Cauchy-Schwarz inequality, the negativity of the curvature
$\mathcal K_T$. The reproducing kernel function $K$ of the Hilbert
space $\mathcal H_K$ encodes the mutual inner products of the
vectors $\{K_w(\cdot): w \in \Omega^*\}$. The Cauchy-Schwarz
inequality, in turn, is just the positivity of the Gramian of the
two vectors $K_w(\cdot)$ and $(\frac{\partial}{\partial\bar{w}}
K_w) (\cdot)$, $w\in\Omega^*$.
The positive definiteness of $K$ is a much stronger positivity requirement
involving all the derivatives of the holomorphic section $K_w(\cdot)$ defined on
$\Omega^*$. We exploit this to show that the curvature function $\big (
\tfrac{~\partial^2}{\partial{z}\partial{\bar{w}}} \log K \big ) (z,w)$ is
actually negative definite not just negative,
whenever $K^t$ is assumed to be positive definite for all $t>0$.

We now construct an example of an operator which is not
contractive but its curvature is dominated by the curvature of the
backward shift. Expanding  the function $K(z,w) = \frac{8+ 8z\bar
w - z^2\bar w^2}{1 - z\bar w}$ in $z\bar{w}$, we see that it has
the form $8 +16 z\bar{w} + 15 \tfrac{z^2\bar{w}^2} {1-z\bar{w}}$.
Therefore, it defines a positive definite kernel on the unit disk
$\mathbb D$. The monomials $\tfrac{z^n}{\|z^n\|}$ with $\|1\|^2
=\tfrac{1}{8}$, $\|z\|^2=\tfrac{1}{16}$ and $\|z^n\|^2 =
\tfrac{1}{15}$ for $n \geq 2$ forms an orthonormal basis in the
corresponding Hilbert space $\mathcal H_K$. The multiplication
operator $M$ maps $\tfrac{z^n}{\|z^n\|}$ to
$\tfrac{\|z^{n+1}\|}{\|z^n\|} \tfrac{z^{n+1}}{\|z^{n+1}\|}$. Hence
it corresponds to a weighted shift operator $W$ with the weight
sequence $\{\sqrt{\tfrac{1}{2}},\sqrt{\tfrac{16}{15}},1,1,\ldots
\}$. Evidently, it is not a contraction. (This is the same as
saying that the function $ K^\ddag(z,w) = 8+ 8z\bar w - z^2\bar
w^2$ is not positive definite.) The operator $W$ is similar to the
forward shift $S$.  Since the class  $B_1(\mathbb D)$ is invariant
under similarity and $S \in B_1(\mathbb D)$, it follows that $W$
is in it as well.  However,
$$
- \frac{~\partial^2}{\partial{w}\partial{\bar{w}}}\log K^\ddag
(w,w) = - \frac{8(8 - 4|w|^2 - |w|^4)}{(8 + 8|w|^2 -
|w|^4)^2},\,\, w\in\mathbb D,
$$
is negative for $|w| < 1$. Hence we have shown that
$\mathcal K_{M^*}(w)= - \frac{~\partial^2}{\partial{w}\partial{\bar{w}}}\log
K(w,w) \leq \mathcal K_{S^*}(w)$, $w\in\mathbb D$, although $M^*$
is not a contraction.

This is not an isolated example, it is easy to modify this example to produce
a family of examples parameterized by a real parameter.

In the following section, we discuss the case of a commuting tuple
${\T}=(T_1,\ldots ,T_m)$ of operators in $B_1(\Omega)$, $\Omega \subseteq
\mathbb C^m$, $m\geq 1$. Even in this case, as before, it is possible to
associate a holomorphic Hermitian bundle $E_{\T}$ to the operator tuple $\T$
such that the equivalence class of the commuting $m$-tuple $\T$ determines the
equivalence class of the bundle $E_{\T}$ and conversely.  We show that the
co-efficient matrix ${\sf K}_T(w)$ of the curvature $(1,1)$ form $\mathcal
K_{\T}$ of the holomorphic Hermitian vector bundle $E_{\T}$ is negative
definite for each $w\in \Omega$. The negativity of the curvature provides an
alternative proof of the curvature inequality given in \cite{dms}.

In the third section, we show that the curvature ${\sf K}_T$ is
negative definite, that is, $\big(\!\big ( {\sf K}_T(w_i,w_j)
\big )\! \big )$ is negative-definite for all finite subsets
$\{w_1,\ldots , w_n\}$ of $\Omega$ if we impose the additional
condition of ``infinite divisibility'' on the reproducing kernel
$K$. The infinite divisibility of the kernel $K$ requires $K^t$ to
be positive-definite for all $t>0$.

In the final section, we give several applications of the positive
definiteness of the curvature function to contractivity of
operators in the Cowen-Douglas class.


\section{Negativity of the curvature in general}\label{2}

Let $\Omega$ be a bounded domain in $\mathbb C^m$. Let ${\T} =
(T_1,\ldots, T_m)$ be a $m$-tuple of commuting operators on a
separable complex Hilbert space $\mathcal H$. For  $x\in \mathcal
H$, let ${D}_{\T}:\mathcal H \ra \mathcal
H\oplus\ldots\oplus\mathcal H$ be the operator defined by
${D}_{\T}(x)=(T_1x,\ldots,T_mx)$. For $w =
(w_1,\ldots,w_m)\in\Omega$, let ${\T}-w$ denote the operator tuple
$(T_1-w_1,\ldots,T_m-w_m)$. The joint kernel of $\T - w$  is
$\cap_{j=1}^m \ker (T_j -w_j)$, which is also the kernel of the
operator ${D}_{{\T}-w}$. Following \cite{cs}, we say that the commuting
tuple $\T$ belongs to the Cowen-Douglas class $B_n(\Omega)$ if
 ${\rm ran}\, D_{\T - w}$ is closed,
 $\dim\,{\ker\, D_{\T-w}} =n$ for all $w$ in $\Omega$, and the
 span of $\{\ker\, D_{\T-w}: w\in\Omega\}$ is dense in $\mathcal H$.
The class of the corresponding holomorphic Hermitian vector bundle
$$E_{\T}= \{(w, x)\in\Omega\times\mathcal H: x\in \ker~
{D}_{{\T}- w}\}$$ determines the class of the operator tuple $\T$.
As before, if $n=1$, then the curvature of $E_{\T}$ (cf.
\cite{cd1,cs}) determines the unitary equivalence class of $\T$.
If $\gamma$ is a non-zero holomorphic section
of the holomorphic Hermitian line bundle $E_{\T}$ defined on some open subset $\Omega_0 \subseteq \Omega$,
then the curvature of the line bundle $E_{\T}$ is the $(1,1)$ form
\beqa \mathcal K_{\T}(w) = -\d_{i,j=1}^{m}\frac{~\partial^2\log
\|{\gamma(w)}\|^2} {\partial{w_i}\partial{\bar w_j}}dw_i\wedge
d\bar w_j,\,w\in\Omega_0, \eeqa defined on
$\Omega_0$.  Let \beq {\sf
K}_{\mathbf{\T}}(w) = \big{(} \! \big (-
\frac{~\partial^2\log\parallel{\gamma(w)}\parallel^2}{\partial{w_i}
\partial{\bar w_j}}\big )\!\big{)} _{i,j=1}^{m},\, w\in\Omega_0,
\eeq
denotes the curvature matrix. In general, for a holomorphic Hermitian vector
bundle, there are two well-known notions of positivity due to Nakano and Griffiths (cf. \cite[page - 338]{dem}).
These two notions coincide in the case of a line bundle, and one talks of
positive line bundle in an unambiguous manner.  The following Proposition shows that
the line bundle corresponding to a commuting tuple of operators in $B_1(\Omega)$
is negative.
\begin{prop} \label{p}
For an operator $\T$ in $\mathrm B_1(\Omega^*)$, the matrix ${\sf
K}_{\mathbf{\T}}(w)$ is negative definite for
each $w\in\Omega^*$.
\end{prop}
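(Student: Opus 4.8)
The plan is to reduce the positive-definiteness statement about the curvature matrix to a Gramian positivity via the reproducing kernel, exactly as the one-variable heuristic in the preceding discussion suggests. Realize $\T$ as the adjoint $\M^*$ of the tuple of multiplication operators on a reproducing kernel Hilbert space $\mathcal H_K$ over $\Omega^*$, with the holomorphic section given by $\gamma(\bar w) = K_w(\cdot)$ and $\|\gamma(\bar w)\|^2 = K(w,w)$. Then the curvature matrix entries become
\[
-\frac{\partial^2 \log K(w,w)}{\partial w_i\,\partial \bar w_j},
\]
and the claim is that the Hermitian matrix with these entries is negative definite; equivalently, that the matrix with entries $\frac{\partial^2 \log K}{\partial w_i\,\partial \bar w_j}(w,w)$ is positive definite for each fixed $w$.

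First I would carry out the logarithmic derivative directly. Writing $\partial_i = \frac{\partial}{\partial w_i}$ and $\bar\partial_j = \frac{\partial}{\partial \bar w_j}$, one computes
\[
\partial_i \bar\partial_j \log K
= \frac{K \,\partial_i \bar\partial_j K - (\partial_i K)(\bar\partial_j K)}{K^2}.
\]
The main step is then to recognize each quantity appearing here as an inner product of the vectors $K_w(\cdot)$ and its derivatives $\partial_i K_w(\cdot)$, $\bar\partial_j K_w(\cdot)$ in $\mathcal H_K$, using the fact (cited from \cite[Lemma 4.3]{cs}) that these derivative vectors all lie in the same Hilbert space. Concretely, $K(w,w) = \|K_w\|^2$, $\bar\partial_j K(w,w) = \langle \partial_j K_w,\, K_w\rangle$ along the appropriate variable, and $\partial_i\bar\partial_j K(w,w) = \langle \partial_j K_w,\, \partial_i K_w\rangle$, once one is careful about which variable the differentiation acts on. I expect the bookkeeping here---keeping track of holomorphic versus anti-holomorphic differentiation and the conjugate-linearity of the second slot---to be the main obstacle, since it is easy to misplace a conjugate and thereby lose the Hermitian symmetry one needs.

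Having identified the entries as inner products, I would finish by showing positive definiteness through a Gramian argument. For an arbitrary nonzero vector $\xi = (\xi_1,\ldots,\xi_m)\in \mathbb C^m$, form the vector
\[
v_\xi = \sum_{j=1}^m \xi_j\Big(\partial_j K_w - \frac{\langle \partial_j K_w, K_w\rangle}{\|K_w\|^2} K_w\Big),
\]
that is, the projection of $\sum_j \xi_j\, \partial_j K_w$ onto the orthogonal complement of $K_w$. A direct expansion of $\|v_\xi\|^2 \ge 0$ reproduces, after dividing by $K(w,w)^2$, precisely the quadratic form $\sum_{i,j} \bar\xi_i\, \big(\partial_i\bar\partial_j \log K\big)(w,w)\, \xi_j$. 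Thus this form is nonnegative, and it vanishes only when $\sum_j \xi_j\,\partial_j K_w$ is a scalar multiple of $K_w$; the linear independence of $K_w$ together with the derivative vectors $\{\partial_j K_w\}$ (which follows from $\gamma$ and its first-order derivatives spanning the relevant jet, as in the rank-one localization discussed above) forces $\xi = 0$. Hence the form is strictly positive, and therefore ${\sf K}_{\T}(w)$ is negative definite, as claimed.
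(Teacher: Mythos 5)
Your proposal is correct, and its core is the same as the paper's own second proof of this proposition: both arguments identify the numerator $K\,\partial_i\bar\partial_j K-\partial_iK\,\bar\partial_jK$ of $\partial_i\bar\partial_j\log K$ in terms of inner products of $K_w$ and its derivatives $\bar\partial_jK_w$ inside $\mathcal H_K$, and then read off the sign from a Gramian. The only real difference is the device: the paper forms the antisymmetric tensors $e_i(w)=K_w\otimes\bar\partial_iK_w-\bar\partial_iK_w\otimes K_w$ in $\mathcal H_K\otimes\mathcal H_K$, whose Gram matrix has $(i,j)$ entry $2\big(K\,\partial_i\bar\partial_jK-\partial_iK\,\bar\partial_jK\big)$, whereas you project $u_\xi=\sum_j\xi_j\bar\partial_jK_w$ onto the orthogonal complement of $K_w$ and expand $\|v_\xi\|^2\geq 0$; these are two ways of writing the same Cauchy--Schwarz identity. (The paper's first proof achieves your projection implicitly, by normalizing the kernel at $w_0$ so that the first-order coefficients $a_{I0},a_{0J}$ vanish.) Where you genuinely go beyond the paper is the equality case: both of the paper's proofs, as written, end with a Gramian and hence establish only negative \emph{semi}-definiteness of ${\sf K}_{\T}(w)$, leaving strictness implicit, while you push on to strict definiteness. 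However, your justification of the required linear independence of $K_w,\bar\partial_1K_w,\ldots,\bar\partial_mK_w$ is the weakest step: the ``rank-one localization'' you invoke is the one-variable jet discussion from the introduction and does not directly cover the several-variable situation. A clean fix: differentiating $M_i^*K_w=\bar w_iK_w$ with respect to $\bar w_j$ gives $(M_i^*-\bar w_i)\bar\partial_jK_w=\delta_{ij}K_w$, so applying $M_i^*-\bar w_i$ to a putative relation $\sum_j\xi_j\bar\partial_jK_w=cK_w$ yields $\xi_iK_w=0$ for each $i$, hence $\xi=0$. With that patch, and with the conjugation bookkeeping you flagged carried out (the vectors lying in $\mathcal H_K$ are the $\bar\partial_jK_w$, with $\partial_i\bar\partial_jK(w,w)=\langle\bar\partial_jK_w,\bar\partial_iK_w\rangle$; also the quadratic form equals $\|v_\xi\|^2/K(w,w)$ rather than $\|v_\xi\|^2/K(w,w)^2$, an inessential slip), your argument is complete.
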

\begin{proof}[First Proof]
Fix $w_0\in\Omega$. As before (cf. \cite{cs}), it follows that $\T$ can be realized as ${\M}^* =
(M_1^*,\ldots,M_m^*)$ where $M_i$ is the multiplication operator by the
co-ordinate function $z_i$ on the Hilbert space $\mathcal H_K$ of holomorphic functions on
$\Omega_0\subseteq\Omega,\, w_0\in\Omega_0,$ possessing a reproducing kernel $K$ with $K(w,w)\neq 0$ for $w\in\Omega_0$.
 The function
$$ K_0(z,w) = K(w_0,w_0)^{\frac{1}{2}}\varphi(z)^{-1}K(z,w)\ov{\varphi(w)^{-1}}
K(w_0,w_0)^{\frac{1}{2}}
$$
is defined on some open neighborhood $U\times U$ of $(w_0,w_0)$, where
$U$ is the open set on which $K(z,w_0)$ is non-zero and $\varphi(z) =
K(z,w_0)$ is holomorphic on $U$. The kernel $K_0$ is said to be normalized at
$w_0$(\cite{cs}).
The operator of multiplication by the holomorphic function $\varphi^{-1}$ then
defines a unitary operator from the Hilbert space $\mathcal H_K$ determined
by the kernel function $K$ to the Hilbert space $\mathcal H_{K_0}$ determined by the normalized kernel function $K_0$.
This unitary operator intertwines the two  multiplication operators on $\mathcal H_K$ and $\mathcal H_{K_0}$ respectively.
Thus $\mathcal K_{\M^*}(w_0)$ is equal to the curvature $\mathcal
K_{{\M^{(0)}}^*}(w_0)$ \cite[Lemma 3.9]{cs}, where
${\M^{(0)}}$ is the $m$-tuple of multiplication operator by the co-ordinate
function $z_i$ on the Hilbert space $\mathcal H_{K_0}$. Let
$$
K_0(z,w) = \sum_{I,J}a_{IJ}(z-w_0)^I(\bar w - \bar w_0)^J,\, z,w\in U,\,I,
J\in \mathbb Z_+^m,
$$
be the power series expansion of $K_0$ around the point $(w_0,w_0)$.
Since $K_0(z,w_0) = 1$, we have that
$a_{00} = 1$ and $a_{I0} = 0$ for all $I$ with $|I|>0$. Similarly,
$K_0(w_0,z) = \ov{K_0(z,w_0)}$ shows that $a_{0J} = 0$ for all $J$ with
$|J|>0$. Also note that if
$$
K_0(z,w)^{-1} = \sum_{I,J}b_{IJ}(z-w_0)^I(\bar w- \bar w_0)^J,\,
z,w\in U,\,I, J\in \mathbb Z_+^m,
$$
then $b_{00} = 1$ and $b_{I0} = 0 = b_{0J}$ for all $I,J$ with
$|I|,|J|>0$. Since $\gamma(w) = K_0(\cdot, \bar w), w\in U^*:=
\{\bar z : z\in U\}$ is a section of the holomorphic Hermitian
line bundle $E_{\wt\M^*}$ over $U^*$, we have \beqa
\lefteqn{\frac{~\partial^2\log\parallel{\gamma(w)}\parallel^2}{\partial{w_i}\partial{
\bar w_j}}\big{|}_{w = w_0}} \\ = && \frac{\partial}{\partial\bar
w_j}(K_0(\bar w,\bar w)^{-1}\frac{\partial}
{\partial w_i}K_0(\bar w,\bar w))\big{|}_{w = w_0}\\
= &&\frac{\partial}{\partial\bar w_j} \{ (1 + \sum_{{}_{|I|,|J|\geq
1}}b_{{}_{IJ}}(\bar w - \bar w_0)^I(w-w_0)^J) (\sum_{{}_{|I|\geq 1,|J|\geq
0}}a_{{}_{I{J+\varepsilon_i}}}{\scriptstyle{(J_i+1)}}(\bar w - \bar
w_0)^I(w-w_0)^J)\}\big{|}_{w = w_0}\\&& =
a_{\varepsilon_j\varepsilon_i} \eeqa where $\varepsilon_i$ is the
standard unit vector in $\C^m$ with $1$ at the $i$-th co-ordinate
and $0$ elsewhere. On the other hand, we have
$$
a_{\varepsilon_j\varepsilon_i} = \frac{~\partial^2 K_0(\bar w, \bar
w)}{\partial{w_i}\partial{\bar
w_j}}\big{|}_{w = w_0} = \langle \frac{\partial}{\partial w_i}K_0(\cdot,
\bar w), \frac{\partial}{\partial w_j} K_0(\cdot, \bar w) \rangle\big{|}_{w
= w_0}.
$$
Thus for any complex constants $\a_1,\ldots,\a_m$,
$$
-\sum_{i,j=1}^m \a_i\bar\a_j
\frac{~\partial^2\log\parallel{\gamma(w)}\parallel^2}{\partial{w_i}\partial{\bar
w_j}}\big{|}_{w = w_0} = -\|\sum_{i=1}^m\a_i
\frac{\partial}{\partial w_i}K_0(\cdot, \bar w)\|^2\big{|}_{w
= w_0}\leq 0.
$$
This completes the proof.

\noindent\textit{Second Proof}. We show that $-{\sf
K}_{\mathbf{\T}}(w)$ is the Gramian of a set of $n$ vectors which is
explicitly exhibited below. These vectors are
$$
e_i(w) = K_w\otimes \frac{\partial}{\partial\bar{w_i}} K_w -
\frac{\partial}{\partial\bar{w_i}} K_w \otimes K_w, \,\, 1\leq i
\leq n,
$$
in $\mathcal H_K\otimes \mathcal H_K$.
Then \beqa \langle e_i(w), e_j(w)\rangle &=& \langle  K_w\otimes
\frac{\partial}{\partial\bar{w_i}} K_w -
\frac{\partial}{\partial\bar{w_i}} K_w \otimes K_w,
 K_w\otimes \frac{\partial}{\partial\bar{w_j}} K_w -
\frac{\partial}{\partial\bar{w_j}} K_w \otimes K_w\rangle \\ &=&
2(K(w,w)\frac{\partial^2K(w,w)}{\partial w_i\partial \bar w_j} -
\frac{\partial}{\partial w_i}K(w,w)\frac{\partial}{\partial\bar
w_j}K(w,w)). \eeqa Thus \beqa
\frac{~\partial^2\log\parallel{\gamma(w)}\parallel^2}{\partial{w_i}\partial{
\bar w_j}}\big{|}_{w = w_0} &=&
\frac{K(w,w)\frac{\partial^2K(w,w)}{\partial w_i\partial \bar w_j}
- \frac{\partial}{\partial w_i}K(w,w)\frac{\partial}{\partial\bar
w_j}K(w,w)}{K(w,w)^2}\big{|}_{w = w_0}\\ &=& \frac{\langle e_i(w_0),
e_j(w_0)\rangle}{2K(w_0,w_0)^2}. \eeqa This completes the proof.
\end{proof}

A commuting tuple of operators $\T = (T_1, \ldots, T_m)$ is
said to be a row contraction if $\sum_{i=1}^m T_iT_i^*\leq I.$
The following characterization of row contractions is well known
(cf. \cite[Corollary 2]{dms}).

\begin{lem}\label{c4lem}
Let $\mathbb{B}^m$ be the unit ball in $\mathbb{C}^m$ and
${\M}=(M_1,\ldots,M_m)$ be $m$-tuples of
multiplication operator on reproducing kernel Hilbert space with
reproducing kernel $K$. Then $\mathbf{\M}$ is a row contraction if and only if $(1-\langle
z,w\rangle) K(z,w)$ is positive definite.
\end{lem}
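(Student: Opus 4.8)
The plan is to test the row-contraction inequality $\sum_{i=1}^m M_iM_i^*\leq I$ directly against the reproducing kernels, which (as already used above) span a dense subspace of $\mathcal H_K$. The single identity that drives the whole argument is that each kernel vector $K_w:=K(\cdot,w)$ is a joint eigenvector for the adjoints: from the reproducing property, $\langle f,M_i^*K_w\rangle=\langle M_if,K_w\rangle=(M_if)(w)=w_if(w)=\langle f,\bar w_iK_w\rangle$, so $M_i^*K_w=\bar w_iK_w$ for every $i$.

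With this in hand I would compute the matrix entries of $I-\sum_iM_iM_i^*$ on the kernels. For $u,w\in\mathbb B^m$,
\beqa
\langle (I-\sum_iM_iM_i^*)K_w,K_u\rangle &=& \langle K_w,K_u\rangle-\sum_i\bar w_i\langle M_iK_w,K_u\rangle\\
&=& K(u,w)-\sum_i\bar w_iu_i\,K(u,w)=(1-\langle u,w\rangle)\,K(u,w),
\eeqa
where I have used $\langle M_iK_w,K_u\rangle=\langle K_w,M_i^*K_u\rangle=u_iK(u,w)$. Consequently, for any finite set $\{w^{(1)},\dots,w^{(n)}\}\subset\mathbb B^m$ and scalars $c_1,\dots,c_n$, setting $f=\sum_kc_kK_{w^{(k)}}$ gives $\langle(I-\sum_iM_iM_i^*)f,f\rangle=\sum_{k,l}c_k\bar c_l\,(1-\langle w^{(l)},w^{(k)}\rangle)K(w^{(l)},w^{(k)})$. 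Thus the Gram matrix of $I-\sum_iM_iM_i^*$ taken over the kernels is exactly the matrix obtained by sampling $(1-\langle z,w\rangle)K(z,w)$ at these points, so the two positivity assertions are one and the same quadratic form.

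Both implications now drop out. If $\M$ is a row contraction, the left-hand quadratic form is $\geq0$ for every choice of points and scalars, which is precisely positive definiteness of $(1-\langle z,w\rangle)K(z,w)$. Conversely, if that kernel is positive definite, the displayed identity gives $\langle(I-\sum_iM_iM_i^*)f,f\rangle\geq0$ for all $f$ in the span of the kernels; in particular $\|M_i^*f\|^2\leq\sum_j\|M_j^*f\|^2\leq\|f\|^2$ there, so each $M_i^*$ (hence each $M_i$) is bounded and the operator inequality $\sum_iM_iM_i^*\leq I$ holds on that span.

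The one point to handle with care is this final density step: the inequality is initially proved only on the linear span of $\{K_w:w\in\mathbb B^m\}$, and to conclude $\sum_iM_iM_i^*\leq I$ on all of $\mathcal H_K$ I must combine the density of that span with the boundedness of $\sum_iM_iM_i^*$ (which, as just noted, is itself forced by the inequality). Beyond keeping the conjugation conventions consistent in $\langle\cdot,\cdot\rangle$ and in the two variables of the kernel, the argument is a direct computation rather than a source of genuine difficulty.
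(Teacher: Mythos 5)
Your proof is correct, but there is nothing in the paper to compare it against: the paper does not prove Lemma \ref{c4lem} at all, it simply quotes it as well known, citing \cite[Corollary 2]{dms} (and, in the one-variable case of Remark \ref{second}, \cite[Corollary 2.37]{am01} and \cite[Lemma 1]{dms}). Your argument is the standard one that underlies those references: the identity $M_i^*K_w=\bar w_iK_w$, the resulting computation $\langle (I-\sum_iM_iM_i^*)K_w,K_u\rangle=(1-\langle u,w\rangle)K(u,w)$, and the observation that sampling this sesquilinear form over finite sets of kernel vectors identifies the quadratic form of $I-\sum_iM_iM_i^*$ on the dense span of kernels with the Gram matrices of the kernel $(1-\langle z,w\rangle)K(z,w)$. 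Both implications then follow exactly as you say. One remark on your closing caveat: in the setting of the lemma the $M_i$ are already given as multiplication operators mapping $\mathcal H_K$ into itself, hence they are bounded automatically (closed graph theorem), so the density step is just continuity of the bounded quadratic form $f\mapsto\langle (I-\sum_iM_iM_i^*)f,f\rangle$; your more careful route (deducing boundedness of $M_i^*$ on the span from the inequality and then recovering $M_i$ as the adjoint of its extension) is what one would need only if boundedness were not assumed, and as written it skips the small verification that the bounded extension of $f \mapsto M_i^* f$ from the span really is the Hilbert-space adjoint of multiplication by $z_i$.
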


Let ${\boldsymbol R}_m^*$ be the adjoint of the joint weighted shift operator on the
Drury-Arveson space $H^2_m$. This is the commuting
tuple $(M_1^*,\ldots,M_m^*)$ on $H^2_m$ which is determined by the
reproducing kernel $\tfrac{1}{1- \langle z,  w\rangle},\,
z = (z_1,\ldots, z_m) ,\, w = (w_1,\ldots, w_m) \in \mathbb B^m$. As in Remark \ref{second}, using
Proposition \ref{p} and  Lemma \ref{c4lem}, we obtain a version of curvature inequality for the
multi-variate case. It appeared earlier in \cite{dms} with a different proof.

\begin{cor}
If $\T = (T_1, \ldots, T_m)$ is a row contraction in $
B_1(\mathbb{B}^m)$, then ${\sf K}_{{\boldsymbol R}_m^*}(w) - {\sf
K}_{\T}(w)$ is positive for each $w$ in the unit ball
$\mathbb B^m$.
\end{cor}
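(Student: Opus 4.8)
The plan is to imitate the single-variable argument of Remark \ref{second}, now reading the curvature identity as an identity of Hermitian matrices and invoking Proposition \ref{p} in place of the scalar inequality \eqref{pos1}. First I realize $\T$ as $\M^* = (M_1^*,\ldots,M_m^*)$ on a reproducing kernel Hilbert space $\mathcal H_K$ with kernel $K$ on $\mathbb B^m$ (note that $\mathbb B^m$ is invariant under conjugation, so $(\mathbb B^m)^* = \mathbb B^m$). Since $\T$ is a row contraction, Lemma \ref{c4lem} gives that
$$
K^\ddag(z,w) := (1-\langle z,w\rangle)K(z,w)
$$
is positive definite on $\mathbb B^m$; in particular $K^\ddag(w,w)>0$ there, so $\log K^\ddag(w,w)$ is a well-defined real-analytic function.

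Next I factor $K(z,w) = \tfrac{1}{1-\langle z,w\rangle}K^\ddag(z,w)$ and take logarithms on the diagonal,
$$
\log K(w,w) = \log\tfrac{1}{1-\|w\|^2} + \log K^\ddag(w,w),\quad w\in\mathbb B^m.
$$
Applying $\partial^2/\partial w_i\partial\bar w_j$, and using that $\tfrac{1}{1-\langle z,w\rangle}$ is precisely the reproducing kernel of the Drury-Arveson space $H^2_m$ attached to ${\boldsymbol R}_m^*$ (so that its $\log$-Hessian is $-{\sf K}_{{\boldsymbol R}_m^*}$), I read off the matrix identity
$$
{\sf K}_{{\boldsymbol R}_m^*}(w) - {\sf K}_{\T}(w) = \Big(\!\Big(\tfrac{\partial^2\log K^\ddag(w,w)}{\partial w_i\partial\bar w_j}\Big)\!\Big)_{i,j=1}^m,\quad w\in\mathbb B^m.
$$

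Finally I apply Proposition \ref{p} to the positive definite kernel $K^\ddag$. Its second proof shows that the matrix $\big(\!\big(\partial_{w_i}\partial_{\bar w_j}\log K^\ddag(w,w)\big)\!\big)$ equals $\tfrac{1}{2K^\ddag(w,w)^2}$ times the Gramian of the vectors $e_i^\ddag(w) = K^\ddag_w\otimes\partial_{\bar w_i}K^\ddag_w - \partial_{\bar w_i}K^\ddag_w\otimes K^\ddag_w$ in $\mathcal H_{K^\ddag}\otimes\mathcal H_{K^\ddag}$, and is therefore positive semidefinite. Combined with the identity above, this yields ${\sf K}_{{\boldsymbol R}_m^*}(w)-{\sf K}_{\T}(w)\geq 0$ for each $w\in\mathbb B^m$, which is the assertion.

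The main obstacle is that Proposition \ref{p} is stated for kernels coming from tuples in $B_1(\mathbb B^m)$, whereas $K^\ddag$ need not arise from such a tuple — indeed, it is exactly the failure of $K^\ddag$ to be a legitimate contraction kernel that is being measured. I must therefore verify that the Gramian computation in the second proof of Proposition \ref{p} uses nothing beyond the positive definiteness of $K^\ddag$ together with the membership of $K^\ddag_w$ and $\partial_{\bar w_i}K^\ddag_w$ in $\mathcal H_{K^\ddag}$; the latter is guaranteed by the real-analyticity of $K^\ddag$ (cf. \cite[Lemma 4.3]{cs}), so the conclusion persists verbatim.
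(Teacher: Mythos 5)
Your proof is correct and is essentially the paper's own argument: the paper proves this corollary precisely by combining Lemma \ref{c4lem} with the factorization of Remark \ref{second}, read as a matrix identity, and then invoking the negativity of the curvature established in Proposition \ref{p} to conclude that the log-Hessian of $K^\ddag(w,w)$ is positive semidefinite. Your closing verification that the Gramian computation in Proposition \ref{p} uses nothing beyond positive definiteness of $K^\ddag$ and membership of $K^\ddag_w$, $\partial_{\bar w_i}K^\ddag_w$ in $\mathcal H_{K^\ddag}$ simply makes explicit a point the paper leaves implicit when it applies the proposition to a kernel that need not come from a Cowen--Douglas tuple.
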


\section{Infinite divisibility and curvature inequality}\label{3}
Starting with a  positive definite kernel  $K$  on a bounded
domain $\Omega$ in $\mathbb C^m$, it is possible to construct
several new positive definite kernel functions. For instance, if
$K$ is positive definite then the kernel $K^n,\, n\in {\mathbb
N},$ is also positive definite. Indeed, a positive definite kernel
$K$ is said to be \emph{infinitely divisible} if for all $t > 0$,
the kernel $K^t$ is also positive definite. While the Bergman
kernel for the Euclidean ball is easily seen to be infinitely
divisible, it is not infinitely divisible for the unit ball (with
respect to the operator norm) of the $n\times n$ matrices. We give
the details for $n=2$  in the final Section of this note. The
following Lemma shows that if $K$ is positive definite then the
matrix valued kernel $\big(\!\big (\tfrac{\partial^2}{\partial
z_i\,\partial \bar{w_j}} K(z,w)\, \big )\!\big )_{i,j=1}^m$ is positive
definite as well.
\begin{lem}\label{dpd}
For any bounded domain $\Omega$ in $\C^m$, if $K$ defines a
positive definite kernel  on $\Omega$, then $-{\sf K}(z,w)=\big(\!\big
(\tfrac{\partial^2}{\partial z_i\, \partial \bar{w_j}} K(z,w)\,\big )\!\big
)_{i,j=1}^m$ is
a positive definite kernel on $\Omega$.
\end{lem}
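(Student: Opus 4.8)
The plan is to exploit the fact that positive definiteness of $K$ means $K$ is a genuine reproducing kernel, so that the realization discussed in the introduction is available: there is a Hilbert space $\mathcal H_K$ of holomorphic functions on $\Omega$ with $K_w := K(\cdot,w)\in\mathcal H_K$ and $K(z,w)=\langle K_w,K_z\rangle$ for $z,w\in\Omega$. Because $K$ is holomorphic in its first variable and anti-holomorphic in the second, the vector-valued map $w\mapsto K_w$ is anti-holomorphic. The whole point is that the second mixed derivative of the scalar kernel $K$ can be re-expressed as an ordinary inner product of derivatives of this map, after which positive definiteness of the matrix kernel $-{\sf K}$ becomes a single squared norm.

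Concretely, for $1\le i\le m$ I would introduce the vectors
\[
v_i(w) := \tfrac{\partial}{\partial \bar w_i}K_w = \tfrac{\partial}{\partial \bar w_i}K(\cdot,w)\in \mathcal H_K ,
\]
whose membership in $\mathcal H_K$ is exactly the content invoked in the remark preceding the lemma (cf. \cite[Lemma 4.3]{cs}). Differentiating $K(z,w)=\langle K_w,K_z\rangle$ once in $z_i$ and using conjugate-linearity of the inner product in its second slot together with anti-holomorphy of $z\mapsto K_z$ gives $\tfrac{\partial}{\partial z_i}K(z,w)=\langle K_w, v_i(z)\rangle$; differentiating this in $\bar w_j$ and using linearity in the first slot yields the key identity
\[
\tfrac{\partial^2}{\partial z_i\,\partial\bar w_j}K(z,w)=\langle v_j(w), v_i(z)\rangle ,\qquad 1\le i,j\le m .
\]
In particular $\overline{(-{\sf K}(z,w))_{ij}}=(-{\sf K}(w,z))_{ji}$, so the matrix kernel is Hermitian, as it must be.

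With this identity the positivity check is immediate. For points $w_1,\dots,w_n\in\Omega$ and vectors $\xi_1,\dots,\xi_n\in\C^m$ with components $\xi_{p,i}$, the associated quadratic form is
\[
\sum_{p,q=1}^{n}\sum_{i,j=1}^{m}\overline{\xi_{p,i}}\,\xi_{q,j}\,\langle v_j(w_q), v_i(w_p)\rangle=\Big\|\sum_{p=1}^n\sum_{i=1}^m \xi_{p,i}\,v_i(w_p)\Big\|^2\ge 0 ,
\]
which is exactly the statement that $-{\sf K}(z,w)=\big(\!\big(\tfrac{\partial^2}{\partial z_i\,\partial\bar w_j}K(z,w)\big)\!\big)_{i,j=1}^m$ is a positive definite kernel. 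I note that for $n=1$ this recovers the negativity of the curvature established in Proposition \ref{p}, so the lemma is genuinely the many-point upgrade of that computation.

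The step requiring the most care is the legitimacy of the two differentiations above, i.e.\ that the $\mathcal H_K$-valued map $w\mapsto K_w$ is differentiable in the norm of $\mathcal H_K$ and that the scalar derivatives of $K$ may be passed inside the inner product. This is standard for reproducing kernels that are holomorphic in one argument and is precisely what \cite[Lemma 4.3]{cs} supplies, so I would cite it rather than re-prove norm-convergence of the difference quotients. (An alternative, more self-contained route would bypass Hilbert-space derivatives entirely by applying matching divided differences in the two variables to the scalar kernel $K$, observing that such divided differences preserve positive definiteness, and then passing to the limit; but the feature-vector argument above is cleaner and dovetails with the notation already in use.)
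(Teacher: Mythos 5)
Your proof is correct and is essentially the paper's own argument: both express $\tfrac{\partial^2}{\partial z_i\,\partial\bar w_j}K(z,w)$ as the $\mathcal H_K$-inner product $\langle \bar\partial_j K_w,\bar\partial_i K_z\rangle$ of the anti-holomorphic derivatives of the kernel sections (citing \cite[Lemma 4.3]{cs} for their membership in $\mathcal H_K$), and then collapse the associated quadratic form over points $w_1,\dots,w_n$ and vectors $\xi_1,\dots,\xi_n$ into a single squared norm $\big\|\sum_{p,i}\xi_{p,i}\,\bar\partial_i K_{w_p}\big\|^2_{\mathcal H_K}\geq 0$. The only differences are cosmetic: you additionally record the Hermitian symmetry and spell out the differentiation of $K(z,w)=\langle K_w,K_z\rangle$, which the paper leaves implicit.
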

\begin{proof}
Let  $\xi_i = (\xi_i(1),\ldots,\xi_i(m))$, $1\leq i\leq m$, be vectors in
$\C^m$ and $u_1,\ldots,u_n$ be an arbitrary set of $n$ points in $\Omega$.
Since $\bar\partial_iK_w$ belongs to $\mathcal H_K$, as
shown in \cite{cs}, it follows that \beqa \sum_{i,j}^n\langle {-\sf
K}(u_i,u_j)\xi_j,\xi_i\rangle_{\C^m} &=&  \sum_{i,j}^n
\sum_{k,l}^m \big (\tfrac{\partial^2}{\partial w_k\,
\partial \bar{w_l}} K \big )(u_i,u_j)\xi_j(l)\ov{\xi_i(k)}\\ &=& \sum_{i,j}^n \sum_{k,l}^m \langle \tfrac{\partial}{
\partial \bar{w_l}}K_{u_j}, \tfrac{\partial}{
\partial \bar{w_k}}K_{u_i}\rangle_{\mathcal H_K}\xi_j(l)\ov{\xi_i(k)}\\&=&
\|\sum_{i}^n \sum_{k}^m\xi_i(k)\tfrac{\partial}{
\partial \bar{w_k}}K_{u_i}\|_{\mathcal H_K}^2 \\&\geq & 0
\eeqa
This completes the proof.
\end{proof}

\begin{rem}
Even in the case of one variable, the proof of the Lemma given is interesting. In fact, this motivates
the proof of the main theorem (Theorem \ref{main}) in one direction. In
particular, it says that if $K$ is a positive definite kernel on a bounded
domain $\Omega\subset\C$, then
$\big ( \tfrac{\partial^2}{\partial z\, \partial \bar{w}} K \big )(z,w)$ is also
a positive definite kernel on $\Omega$.
Without loss of generality, assume that $0$ is in  $\Omega$  and let $K(z,w) =
\sum_{m,n}^\infty a_{mn}z^m\bar w^n$ be the power series expansion of $K$ around
$0$. It is shown in \cite[Lemma 4.1 and 4.3]{cs} that the positivity of the kernel
$K$ is equivalent to the positivity of the matrix of Taylor co-efficients of $K$
at $0$, namely,
\beqa
H_{n}(0 ; K):= \begin{pmatrix}
a_{00} & a_{01} & a_{02} & \cdots & a_{0n} \\
a_{10} & a_{11} & a_{12} & \cdots & a_{1n} \\
\vdots & \vdots & \vdots & \ddots & \vdots \\
a_{n0} & a_{n1} & a_{n2} & \cdots & a_{nn} \\
\end{pmatrix}
\eeqa
for each $n\in \mathbb Z_+$.
The function $\tfrac{\partial^2}{\partial z\,\partial\bar{w}} K(z,w)$ admits the expansion
$$\sum_{m,n=0}^\infty {(m+1)(n+1)}a_{(m+1)(n+1)}z^m\bar w^n.$$ Therefore, for $n\in\N$,

$$H_{n - 1}(0 ; \frac{\partial^2}{\partial z\,\partial\bar{w}} K)=\begin{pmatrix}
a_{11} & 2a_{12} & \cdots & na_{1n} \\
2a_{21} & 4a_{22} & \cdots & 2na_{2n} \\
\vdots &  \vdots & \ddots & \vdots \\
na_{n1} & 2na_{n2} & \cdots & n^2a_{nn} \\
\end{pmatrix}.
$$
Clearly, for $n\in\N$, we have
$$
\Big (\begin{smallmatrix}0_{1\times 1}&0_{1\times n}\\ 0_{n\times 1}&H_{n -1}(0 ;
 \frac{\partial^2}{\partial z\,\partial\bar{w}} K)\end{smallmatrix}\Big )=  D\big (H_{n}(0 ; K) \big ) D ,
$$
where $D:\C^{n+1} \to \C^{n+1}$ is the linear map which is
diagonal and is  determined by the sequence $\{0,1,\ldots,
k,\ldots, n \}$. It therefore follows that $H_{n}(0 ;
\frac{\partial^2}{\partial z\,\partial\bar{w}} K)$ is positive
definite for all $n\in \mathbb Z_+$. Consequently,
$\tfrac{\partial^2}{\partial z\,\partial\bar{w}}K$ is a positive
definite kernel.
\end{rem}

The following Lemma encodes a way to extract scalar valued positive definite kernel from the matrix valued one.

\begin{lem}\label{inn}
If $K$ is a $n\times n$ matrix valued positive definite kernel on a bounded domain $\Omega\subset\C^m$, then for every $\zeta\in\C^n$,
$\langle K(z, w)\zeta,\zeta\rangle_{\C^n}$ is also
a positive definite kernel on $\Omega$.
\end{lem}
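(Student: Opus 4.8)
The plan is to deduce the scalar statement directly from the definition of positive definiteness of the matrix valued kernel $K$ by specialising the test vectors appropriately. Fix $\zeta\in\C^n$ and write $k(z,w):=\langle K(z,w)\zeta,\zeta\rangle_{\C^n}$. Recall that $K$ being a positive definite $n\times n$ matrix valued kernel means that for every finite subset $\{w_1,\ldots,w_p\}$ of $\Omega$ and every choice of vectors $\xi_1,\ldots,\xi_p\in\C^n$ one has
$$
\sum_{i,j=1}^p\langle K(w_i,w_j)\xi_j,\xi_i\rangle_{\C^n}\ge 0 .
$$
To prove that $k$ is a scalar positive definite kernel I must show that $\sum_{i,j=1}^p c_j\bar c_i\,k(w_i,w_j)\ge 0$ for arbitrary points $w_1,\ldots,w_p\in\Omega$ and scalars $c_1,\ldots,c_p\in\C$.

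The key step is the substitution $\xi_i=c_i\zeta$ for $1\le i\le p$ in the displayed inequality. Using that the inner product on $\C^n$ is linear in the first slot and conjugate-linear in the second, each summand factors as
$$
\langle K(w_i,w_j)\,c_j\zeta,\,c_i\zeta\rangle_{\C^n}
= c_j\bar c_i\,\langle K(w_i,w_j)\zeta,\zeta\rangle_{\C^n}
= c_j\bar c_i\,k(w_i,w_j).
$$
Summing over $i,j$ and invoking positive definiteness of $K$ with this particular choice of $\{\xi_i\}$ yields $\sum_{i,j}c_j\bar c_i\,k(w_i,w_j)\ge 0$, which is exactly the desired positive definiteness of $k$. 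The holomorphy in the first variable and anti-holomorphy in the second are inherited verbatim from the corresponding entrywise properties of $K$, so no separate argument is required there.

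Since the argument is a one-line specialisation of the definition, there is no genuine obstacle; the only point demanding care is the bookkeeping of the complex conjugate, that is, making sure the index carrying the conjugation in the scalar sum $\sum c_j\bar c_i$ matches the slot in which $\xi_i$ sits in the sesquilinear form defining positive definiteness of $K$. Aligning these two conventions is precisely what guarantees that the specialisation produces the standard Gram-type inequality rather than its transpose.
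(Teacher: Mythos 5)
Your proof is correct, and it takes a genuinely more elementary route than the paper's. You work purely from the definition of positive definiteness for a matrix-valued kernel: substituting the test vectors $\xi_i=c_i\zeta$ into the block quadratic form
\[
\sum_{i,j=1}^p\langle K(w_i,w_j)\xi_j,\xi_i\rangle_{\C^n}\geq 0
\]
collapses it exactly to the scalar inequality $\sum_{i,j}c_j\bar c_i\langle K(w_i,w_j)\zeta,\zeta\rangle_{\C^n}\geq 0$, and the conjugation bookkeeping you emphasize is indeed the only point of care. The paper argues instead through the reproducing kernel Hilbert space $\mathcal H_K$ attached to the matrix-valued kernel (citing \cite{cs} for its existence): the vector-valued functions $K(\cdot,u_j)\zeta$ lie in $\mathcal H_K$, the reproducing property gives $\langle K(\cdot,u_j)\zeta,K(\cdot,u_i)\zeta\rangle_{\mathcal H_K}=\langle K(u_i,u_j)\zeta,\zeta\rangle_{\C^n}$, and the quadratic form is then exhibited as the squared norm $\bigl\|\sum_j\bar\alpha_j K(\cdot,u_j)\zeta\bigr\|_{\mathcal H_K}^2\geq 0$. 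Your argument needs no Moore--Aronszajn-type theorem for vector-valued kernels and is self-contained modulo the definition; the paper's version buys uniformity with the rest of the text, where the same Gramian/norm-squared device appears in Lemma \ref{dpd} and in the second proof of Proposition \ref{p}, and it displays the positivity concretely as a Gram matrix of vectors in $\mathcal H_K$. Since the existence of $\mathcal H_K$ is equivalent to the definitional positivity you invoke, the two arguments are two sides of the same coin, and neither has a gap.
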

\begin{proof}
Let $K_{\zeta}(z,w) = \langle K(z, w)\zeta,\zeta\rangle_{\C^n}$.
Let $u_1,\ldots,u_l$ be $l$ points in $\Omega$ and $\a_i$, $1\leq
i\leq l$, be scalars in $\C$. From \cite{cs}, it follows that
\beqa
\sum_{i,j}^l\a_iK_{\zeta}(u_i, u_j)\bar\a_j &=& \sum_{i,j}^l \a_i\bar\a_j\langle K(\cdot, u_j)\zeta, K(\cdot, u_i)\zeta\rangle_{\mathcal H_K}\\
&=& \|\sum_{j}^l\bar\a_j K(\cdot, u_j)\zeta\|_{\mathcal H_K}^2\\&\geq & 0
\eeqa
This completes the proof.
\end{proof}

\begin{defn}
Let $G$ be a real analytic function of $w,\bar{w}$ for $w$ in some
open connected subset $\Omega$ of $\mathbb C^n$. Polarizing $G$,
we obtain a (unique) new function $\wt G$ defined on $\Omega\times \Omega$
which is holomorphic in the first variable and anti-holomorphic in
the second and restricts to $G$ on the diagonal set $\{(w,w): w\in
\Omega\}$, that is, $\tilde{G}(w,w)=G(w,w)$,  $w\in\Omega$. If the
function $\tilde{G}$ is also positive definite, that is, the
$n\times n$ matrix $ \big (\!\big ( \wt{G}(w_i,\bar{w}_j)
\big)\!\big ) $ is positive definite for all finite subsets
$\{w_1,\ldots , w_n\}$ of $\Omega$, then we say that $G$ is a
positive definite function on $\Omega$.
\end{defn}
The curvature $\mathcal K$ of a line bundle is a real analytic
function. We have shown that $-{\sf K}(w),$
$w\in\Omega\subset\mathbb{C}^m,$ is positive definite. However,
the following example shows that $-{\sf K}(w)$ need not be a
positive definite function, that is, $-\widetilde{{\sf K}}(w)$
need not be positive definite! We adopt the convention that the
positive definiteness of the real analytic function $-{\sf K}(w)$
is the same as the positive definiteness of the Hermitian function
$-\widetilde{{\sf K}}(w)$.

\begin{ex} Let $K(z,w) = 1+ \sum_{i=1}^\infty a_iz^i\bar w^i$ be a
positive definite kernel on the unit disc $\mathbb D$. The kernel $K$ then
admits a power series expansion some small neighborhood of $0$.  Consequently,
we
have
\beqa
\log K (z,w) &=& \log (1+ \sum_{i=1}^\infty a_iz^i\bar w^i)\\
&=& \sum_{i=1}^\infty a_iz^i\bar w^i - \frac{(\sum_{i=1}^\infty a_iz^i\bar w^i)^2}{2} + \frac{(\sum_{i=1}^\infty a_iz^i\bar w^i)^3}{3} - \cdots \\
&=& a_1z\bar w + (a_2 - \tfrac{a_1^2}{2})z^2\bar w^2 + (a_3 - a_1a_2 +
\tfrac{a_1^3}{3})z^3\bar w^3 + \ldots
\eeqa
It follows that
\beqa
\big (\tfrac{\partial^2}{\partial z\,\partial\bar{w}} \log  K\big )(z,w) ~=~ a_1 + 4 (a_2 - \tfrac{a_1^2}{2})z\bar w + 9 (a_3 - a_1a_2 + \tfrac{a_1^3}{3})z^2\bar w^2 + \ldots
\eeqa
Thus if we choose $0<a_i$, $i\in \mathbb N$, such that $a_2< \frac{a_1^2}{2}$, then from \cite[Lemma 4.1 and 4.3]{cs}, it follows that $\frac{\partial^2}{\partial z\,\partial\bar{w}} \log K$ is
not positive definite.

However we note, for instance, that if $K$ is the function $1 + z\bar w +\frac{1}{4}z^2\bar w^2 +\sum_{i=3}^\infty z^i\bar w^i$, then
$$
K^t(z,w) = 1 + tz\bar w + \frac{t(2t - 1)}{4}z^2\bar w^2+\cdots
$$
is not positive definite for $t<\frac{1}{2}$.
\end{ex}
It is therefore natural to ask if assuming that $K$
is infinitely divisible is both necessary and sufficient for positive
definiteness of the curvature function $-{\sf K}$.
The following Theorem provides an affirmative answer.

\begin{thm}\label{main}
Let $\Omega$ be a domain in $\C^m$ and $K$ be a positive real
analytic function on $\Omega\times\Omega$. If $K$ is infinitely divisible then
there exist a domain $\Omega_0\subseteq \Omega$ such that the
curvature matrix $\big(\!\big (\tfrac{\partial^2}{\partial w_i\,
\partial \bar{w_j}} \log K \big )\!\big )_{i,j=1}^m$ is positive definite function on $\Omega_0$.
Conversely, if the function $\big(\!\big
(\tfrac{\partial^2}{\partial w_i\,
\partial \bar{w_j}}\log \hat{K} \big )\!
\big )_{i,j=1}^m$ is a positive definite on $\Omega$, then there
exist a neighborhood $\Omega_0 \subseteq \Omega$ of $w_0$ and a
infinitely divisible kernel $K$ on $\Omega_0\times \Omega_0$ such
that $K(w,w) = \hat K(w,w)$, for all $w\in\Omega_0$.

\end{thm}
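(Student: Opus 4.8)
The plan is to prove both implications by a single mechanism: passing between a kernel and the logarithm of its powers, while exploiting that positive definiteness is preserved under pointwise limits, Schur products, and multiplication by rank-one kernels. For the forward implication I would differentiate $K^t=e^{t\log K}$ at $t=0$. Writing $L(z,w)$ for the polarization of $\log K$ (holomorphic in $z$, anti-holomorphic in $w$, and defined wherever $K\neq0$, hence on a neighborhood $\Omega_0\times\Omega_0$ of the diagonal), a direct computation gives $\tfrac1t\tfrac{\partial^2}{\partial z_i\,\partial\bar w_j}K^t = e^{tL}\big(t\,\tfrac{\partial L}{\partial z_i}\tfrac{\partial L}{\partial\bar w_j}+\tfrac{\partial^2 L}{\partial z_i\,\partial\bar w_j}\big)$, so that $\tfrac1t\big(\!\big(\tfrac{\partial^2}{\partial z_i\partial\bar w_j}K^t\big)\!\big)\to\big(\!\big(\tfrac{\partial^2}{\partial z_i\partial\bar w_j}\log K\big)\!\big)$ pointwise as $t\to0^+$. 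Since $K^t$ is positive definite for every $t>0$, Lemma \ref{dpd} applied to $K^t$ shows each $\tfrac1t\big(\!\big(\tfrac{\partial^2}{\partial z_i\partial\bar w_j}K^t\big)\!\big)$ is a positive definite matrix-valued kernel; as the cone of positive definite matrices is closed under pointwise limits, the limit, which is exactly the polarized curvature matrix, is positive definite, and $\Omega_0$ is the neighborhood of the diagonal on which $\log K$ is available.

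For the converse, set $\varphi=\widetilde{\log\hat K}$, the polarization of $\log\hat K(w,w)$, defined on $\Omega_0\times\Omega_0$ for a small neighborhood $\Omega_0$ of $w_0$ since $\hat K(w_0,w_0)>0$. I would take the candidate kernel to be $K:=e^{\varphi}$; then automatically $K(w,w)=e^{\varphi(w,w)}=\hat K(w,w)$, so only infinite divisibility, i.e. positivity of $K^t=e^{t\varphi}$ for all $t>0$, remains to be checked. Expanding $\varphi(z,w)=\sum_{I,J}c_{IJ}(z-w_0)^I(\bar w-\bar w_0)^J$ around $w_0$, I would split off the pluriharmonic part and, using that $\varphi$ is Hermitian, write $\varphi=p(z)+\overline{p(w)}-c_{00}+\varphi_+$, where $p$ is holomorphic, $c_{00}=\log\hat K(w_0,w_0)\in\R$, and $\varphi_+=\sum_{|I|,|J|\geq1}c_{IJ}(z-w_0)^I(\bar w-\bar w_0)^J$ collects the terms of positive multidegree in both variables. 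Consequently $e^{t\varphi}=e^{-tc_{00}}\,e^{tp(z)}\,\overline{e^{tp(w)}}\,e^{t\varphi_+}$, and since $e^{tp(z)}\overline{e^{tp(w)}}$ is a rank-one positive definite kernel with nonvanishing symbol, $e^{t\varphi}$ is positive definite if and only if $e^{t\varphi_+}$ is.

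It then remains to show that $\varphi_+$ is itself a positive definite kernel, for then $e^{t\varphi_+}=\sum_{k\geq0}\tfrac{t^k}{k!}\varphi_+^{\,k}$ is positive definite for every $t>0$, being a locally uniformly convergent sum, with positive coefficients, of Schur powers of a positive definite kernel. The hypothesis is precisely that $\big(\!\big(\tfrac{\partial^2}{\partial z_i\partial\bar w_j}\log\hat K\big)\!\big)=\big(\!\big(\tfrac{\partial^2}{\partial z_i\partial\bar w_j}\varphi_+\big)\!\big)$ is a positive definite matrix kernel, since the mixed derivative annihilates the pluriharmonic part. By the multivariable Taylor-coefficient criterion of \cite{cs}, this amounts to positivity of the block matrix with entries $(I_i+1)(J_j+1)\,c_{(I+\varepsilon_i)(J+\varepsilon_j)}$; feeding into it the vectors manufactured from an arbitrary finitely supported family $(d_I)_{|I|\geq1}$ through the relation $d_I=\sum_{i:\,I_i\geq1}I_i\,\xi_{(I-\varepsilon_i),i}$, I would extract positivity of $(c_{IJ})_{|I|,|J|\geq1}$, which is exactly the positive definiteness of $\varphi_+$.

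The main obstacle is this last step in several variables. In the one-variable Remark preceding the theorem the curvature coefficient matrix is merely a diagonal conjugate of $(c_{mn})_{m,n\geq1}$, so positivity transfers at once; in $\C^m$, however, the passage from the positive definite matrix-valued curvature to positive definiteness of the scalar kernel $\varphi_+$ hinges on the surjectivity of the index map $\xi\mapsto d$ above. Verifying that every target family $(d_I)$ is attained, so that no information about $(c_{IJ})$ is lost, is the delicate bookkeeping point; once it is settled, everything else, namely the $t\to0$ limit, the Schur-product positivity of $e^{t\varphi_+}$, and the rank-one factorization, is robust and essentially formal.
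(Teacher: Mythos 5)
Your proposal is sound, and it reaches the theorem by routes that differ from the paper's at both ends; moreover, the one step you leave open is a one-line verification, so nothing essential is missing. In the forward direction the paper never differentiates $K^t$: it shows instead that $\log K$ is \emph{conditionally} positive definite (as the limit of $t^{-1}(e^{t\log K}-1)$), invokes Lemma 1.7 of \cite{ps} to convert this into honest positive definiteness of $L_{w_0}(z,w)=\log K(z,w)-\log K(z,w_0)-\log K(w_0,w)+\log K(w_0,w_0)$, applies Lemma \ref{dpd} to $L_{w_0}$, and notes that the subtracted terms are killed by $\partial\bar\partial$. Your argument --- apply Lemma \ref{dpd} to $K^t$ itself, scale by $1/t$, and let $t\to 0^+$ using stability of positive definiteness under pointwise limits --- is a genuine simplification, avoiding conditional positive definiteness and the \cite{ps} lemma altogether. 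In the converse direction your skeleton coincides with the paper's: your $\varphi_+$ is exactly the paper's kernel $K_0$, and your factorization $K^t=e^{-tc_{00}}e^{tp(z)}\,\overline{e^{tp(w)}}\,e^{t\varphi_+}$ matches their final display. The difference is how positive definiteness of $\varphi_+$ is extracted from the matrix hypothesis: the paper conjugates ${\sf K}$ by the diagonal function $D(z)$, passes from the coefficient matrices $D(I)E_mD(J)$ to $E(I,J)/(c(I)c(J))$ through the explicit congruence $H_\delta(0;\widehat{\widetilde{\sf K}})=A_\delta H_\delta(0;\widetilde{\sf K})A_\delta^*$, and then compresses by the vector $\mathbf 1$ (Lemma \ref{inn}); you work directly with the quadratic form on Taylor coefficients. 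Both rest on the coefficient criterion of \cite[Lemmas 4.1 and 4.3]{cs}.

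The ``delicate bookkeeping point'' you flag --- surjectivity of $\xi\mapsto d$, where $d_I=\sum_{k:\,I_k\ge 1}I_k\,\xi_{I-\varepsilon_k,k}$ --- is in fact immediate, because distinct coordinates of $\xi$ feed into distinct coordinates of $d$: the pair $(I'',k)$ contributes only to the index $I=I''+\varepsilon_k$, which it determines uniquely. Given a finitely supported family $(d_I)_{|I|\ge1}$, choose for each $I$ in its support one index $k_I$ with $I_{k_I}\ge1$ and set $\xi_{I-\varepsilon_{k_I},\,k_I}:=d_I/I_{k_I}$, all remaining coordinates of $\xi$ being zero. No two chosen coordinates coincide, since the pair $(I-\varepsilon_{k_I},k_I)$ recovers $I$; consequently $\sum_{k:\,I_k\ge1}I_k\,\xi_{I-\varepsilon_k,k}=d_I$ for every $I$ with $|I|\geq 1$ --- for $I$ in the support only the term $k=k_I$ survives, and for $I$ outside the support every term vanishes. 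Plugging these $\xi$ into the positivity of the block matrix $\big(\!\big((I_i+1)(J_j+1)\,c_{(I+\varepsilon_i)(J+\varepsilon_j)}\big)\!\big)$ yields $\sum_{|I|,|J|\ge1}c_{IJ}\,d_I\bar d_J\ge0$ for every finitely supported $(d_I)$, which is precisely the positive definiteness of $\varphi_+$ by the coefficient criterion; the remaining steps (Schur powers giving positivity of $e^{t\varphi_+}$, multiplication by the rank-one factor) are, as you say, formal. With this insertion your proof is complete.
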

\begin{proof}
For each $t>0$, assume that $K^t$ is positive definite on
$\Omega$. This is the same as  the positive definiteness of  $\exp
t\log K$, $t>0$. Clearly $t^{-1}(\exp t\log K - 1)$ is
conditionally positive definite (An Hermitian kernel $L$ is said
to be conditionally positive definite if for every $n\in\N$ and
for every choice $n$ points $w_1,\ldots, w_n$ and complex scalars
$\a_1,\ldots, \a_n$ with $\sum_{i=1}^n\a_i = 0$, the inequality
$\sum_{i,j = 1}^n\a_i \bar \a_j L(w_i, w_j) \geq 0$ holds). By
letting $t$ tend to $0$, it follows that $\log K$ is
conditionally positive definite. Hence at an arbitrary point in
$\Omega$, in particular at $w_0$, the kernel \beqa L_{w_0}(z,w) =
\log K(z,w) - \log K(z,w_0) - \log K(w_0,w) + \log K(w_0,w_0)
\eeqa is positive definite. This is essentially the Lemma $1.7$ in
\cite{ps}. From Lemma \ref{dpd}, it follows that the matrix
$\big(\!\big (\tfrac{\partial^2}{\partial w_i\,
\partial \bar{w_j}} L_{w_0} \big )\!
\big )$ is positive definite on $\Omega$.
 Note that there exist a neighborhood
  $\Omega_0 \subseteq \Omega$ of $w_0$ such that
  $\log K(z,w_0)$ is holomorphic on $\Omega_0$.
  Hence from the equation above, the curvature matrix
$\big(\!\big (\tfrac{\partial^2}{\partial w_i\,
\partial \bar{w_j}} \log K \big )\!
\big )$ is positive definite on $\Omega_0$.
This proves the Theorem in the forward direction.

For the other direction, without loss of generality,
 assume that $w_0 = 0$. Let ${\sf K}(z,w)$
  be the function obtained by polarizing the
real analytic $m\times m$ matrix valued function
$$\big (\!\big (\tfrac{\partial^2}
{\partial w_i \partial \bar w_j}\log \hat{K}(w,w)\big )\! \big
)_{i,j=1}^m$$ defined on some bounded domain $\Omega$ in $\mathbb
C^m$. Suppose that $\log \hat{K}$ has the power series expansion
$\sum a_{IJ} z^I\bar{w}^J$,  where the sum is over all
multi-indices $I,J$ of length $m$ and $z^I = z_1^{i_1} \cdots
z_m^{i_m}$, $\bar{w}^J = \bar{w}_1^{j_1} \cdots \bar{w}_m^{j_m}$.
Then
$${\sf K}(z,w) = \sum_{I,J} a_{IJ}
\big (\! \big ( A_{IJ}(k,\ell) z^{I-\epsilon_k}
\bar{w}^{J-\epsilon_\ell}\big )\! \big ) ,
$$ where $A_{IJ}(k,\ell) =\big(\!\!\big (\scriptstyle{i_k \,j_\ell}\big)\!\!\big )_{k,\ell=1}^m$
and the sum is again over all multi-indices $I,J$ of size $m$.
Clearly, $A_{IJ}$ can be written as the product $D(I)\,E_m \,
D(J)$, where $D(I)$ and $D(J)$ be the $m\times m$ diagonal
matrices with $(i_1,\ldots ,i_m)$ and $(j_1,\ldots ,j_m)$ on their
diagonal respectively, and $E_m$ is the $m\times m$ matrix all of whose
entries are $1$.

Let $D(z)$ be the holomorphic function on $\Omega$ taking values
in the $m\times m$ diagonal matrices which has $z_i$ in the
$(i,i)$ position for $z :=(z_1,z_2,\ldots, z_m)\in \Omega$. If the
function ${\sf K}$ is assumed to be positive definite then
$$\widetilde{{\sf K}}(z,w):=D(z)\,{\sf K}(z,w) \,D(\bar{w})=
\sum_{I,J} a_{IJ} D(I)\,E_m \, D(J) z^I\bar{w}^J
$$
is positive definite on $\Omega_0$.

Let $\Lambda(I) = \{k: 1\leq k\leq m \mbox{~and~} i_k\neq 0\}$.
Consider the $m\times m$ matrix $E(I,J)$ defined below:
\beqa
E(I,J)_{ij} =  \begin{cases} 1 &\mbox{if}
\,\, i\in\Lambda(I) \mbox{~and~} j\in\Lambda(J),\\ 0  & \mbox{otherwise}. \end{cases}
\eeqa
Note that if $\Lambda(I) = \Lambda(J) = \{1,\ldots,m\}$, then $E(I,J) = E_m$.
Consider the function on $\Omega_0\times \Omega_0$, defined by
$$
\widehat{\widetilde{{\sf K}}}(z,w) = \sum_{I,J\neq 0} a_{IJ}
\tfrac{E(I,J)}{c(I)c(J)} z^I\bar{w}^J,
$$ where $c(I)$ denotes the cardinality of the set $\Lambda(I)$. We will prove that
$\widehat{\widetilde{{\sf K}}}$ is a positive definite kernel on
$\Omega_0$. To facilitate the proof, we need to fix some
notations.

Let $\delta$ be a multi-index of size $m$. Also let $p(\delta) =
\prod_{j=1}^m(\delta_j+1)$ which is the number of multi-indices $I\leq\delta$, that is,
$i_l\leq\delta_l,\, 1\leq l\leq m$. As par the notation in \cite{cs}, given a function $L$ on a domain $U\times U$
which is holomorphic in the first variable and antiholomorphic in the second,
let $H_{\delta} (w_0; L)$ be the $p(\delta)\times p(\delta)$ matrix whose
$(I, J)$-entry is $\tfrac{\partial^I {\bar\partial}^J L(w_0,w_0)}{I!J!},\, 0\leq I,J\leq\delta$.
For convenience, one uses the colexicographic order to write down the matrix, that is,
$I\leq_c J$ if and only if $(i_m<j_m)$ or
$(i_m = j_m \mbox{~and~} i_{m-1}<j_{m-1})$ or
$\cdots$ or $(i_m = j_m \mbox{~and~} \ldots i_2 = j_2 \mbox{~and~} i_1<j_1)$ or $I = J$.

Let $D(I)^\sharp$ be the diagonal matrix with the diagonal entry
$D(I)^\sharp_{\ell\,\ell}$ equal to $\tfrac{1}{i_\ell}$ or $0$
according as $i_\ell$ is non-zero or zero.
Using this notation, we have \beqa D(I)^\sharp D(I)\,E_m \,
D(J)D(J)^\sharp = E(I,J). \eeqa Let $A_\delta$ be the block diagonal
matrix,  written in the colexicographic ordering, of the form
\beqa
(A_\delta)_{IJ} =  \begin{cases} \tfrac{D(I)^\sharp}{c(I)}
&\mbox{if} \,\, I = J (\neq 0)\\ 0  & \mbox{otherwise}.
\end{cases}
\eeqa
Therefore, in this setup, for any multi-index $\delta$, we have
$$
H_{\delta} (0; \widehat{\widetilde{{\sf K}}}) = A_\delta
H_{\delta} (0; {\widetilde{{\sf K}}}) A_\delta^*.
$$
Clearly $H_{\delta} (0; \widehat{\widetilde{{\sf K}}})$ is
positive definite since $H_{\delta} (0; {\widetilde{{\sf K}}})$ is
so by \cite[Lemma 4.1]{cs}. Thus from \cite[Lemma 4.3]{cs}, it
follows that $\widehat{\widetilde{{\sf K}}}$ is a positive
definite kernel.

Let $ K_0$  be the scalar function on $\Omega_0\times\Omega_0$
defined by
$$K_0(z,w):= \sum a_{IJ} z^I \bar{w}^J,$$
where the sum is over all pairs $(I,J)$ excluding those of the
form $(I,0)$ and $(0,J)$. From Lemma \ref{inn}, it follows that
the function  $ K_0$ is positive definite since it is of the form
$\langle \widehat{\widetilde{{\sf K}}}(z,w)\mathbf 1, \mathbf
1\rangle, $ $\mathbf 1 = (1,\ldots, 1)$. It is evident that
$$
\Big (\!\!\Big( \, \big (\tfrac{\partial^2}{\partial w_i \partial
{\bar w}_j} {K_0} \big )(w,w)\, \Big )\!\!\Big )= {\sf K}(w,w),
$$
that is,
$$
\tfrac{\partial^2}{\partial w_i\,\partial\bar{w}_j}(\log \hat K -
K_0(w,w)) = 0,\,\, 1\leq i,j\leq m,\,\, w\in \Omega_0.
$$

Therefore, $\log \hat K - K_0$ is a real pluriharmonic function on
$\Omega_0$ and hence there exist a holomorphic function $\phi$
such that
$$
\log \hat K(w,w) - K_0(w,w) = (\Re \phi)(w) := \frac{\phi(w) +
\ov{\phi(w)}}{2}.
$$

(Alternatively, since $\log \hat K$ is real analytic, it follows that \beqa \sum_{I,J}
a_{IJ}w^I\bar w^J~=~\sum_{I,J}
\bar a_{IJ}w^J\bar w^I\eeqa Equating coefficients, we get $a_{IJ}~=~\bar a_{JI}$
for all multi-indices $I,J$. In particular, we have $a_{I0}~=~\ov{a_{0I}}$ for all multi-indices $I$. The power series
$$(a_{00}/2)+\sum_I a_{I0}{z}^I$$ defines a holomorphic function
$\psi$ on $\Omega_0$ such that $\log \hat K(w,w) - K_0(w,w) =
\psi(w) + \ov{\psi(w)}$.)

\noindent Thus \beqa\label{nice} \hat K(w,w) =
\exp(\tfrac{\phi(w)}{2})\exp(K_0(w,w))\exp(\tfrac{\ov{\phi(w)}}{2}),\,\,w\in
\Omega_0. \eeqa Let $K:\Omega_0\times \Omega_0\ra\C$ be the
function defined by the rule
$$
K(z,w) =
\exp(\tfrac{\phi(z)}{2})\exp(K_0(z,w))\exp(\ov{\tfrac{\phi(w)}{2}}).
$$
For $t>0$, we then have
$$
K^t(z,w) =
\exp(t\tfrac{\phi(z)}{2})\exp(tK_0(z,w))\exp(t\ov{\tfrac{\phi(w)}{2}}),\,\,
z,w\in \Omega_0.
$$
By construction $K(w,w) = \hat K(w,w)$, $w\in \Omega_0$. Since
$K_0$ is a positive definite kernel as shown above, it follows
from \cite[Lemma 1.6]{ps} that $\exp(tK_0)$ is a positive definite
kernel for all $t>0 $ and therefore $K^t$ is a positive definite
on $\Omega_0$ for all $t>0$ completing the proof of the converse.
\end{proof}

\section{Applications}

Let $\M^*$ be the adjoint of the commuting tuple of
multiplication operators acting on the Hilbert space $\mathcal H_K
\subseteq \mathcal O(\Omega)$. Fix a positive definite kernel
$\mathfrak K$ on $\Omega$.  Let us say that $\M$ is infinitely
divisible with respect to $\mathfrak K$ if  $\mathfrak
K(z,w)^{-1}K(z,w)$ is infinitely divisible in some open subset
$\Omega_0$ of $\Omega$. As an immediate application of Theorem \ref{main}, we obtain :

\begin{thm}
Assume that the adjoint $\M^*$ of the multiplication
operator $\M$ on the reproducing kernel Hilbert space
$\mathcal H_K$ belongs to $B_1(\Omega)$. The  function
$\big (\!\!\big (\frac{\partial^2}{\partial w_i\,\partial\bar{w}_j} \log \big
(\mathfrak K(w,w)^{-1} K(w,w) \big )\, \big)\!\!\big )$ is positive definite, if
and only if the multiplication operator $\M$ is infinitely
divisible with respect to $\mathfrak K$.
\end{thm}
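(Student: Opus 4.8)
The plan is to read this theorem as nothing more than a restatement of Theorem \ref{main} applied to the quotient kernel. Set $\hat{K}(z,w) := \mathfrak{K}(z,w)^{-1}K(z,w)$. Since $\mathfrak{K}$ is positive definite we have $\mathfrak{K}(w,w)>0$ on the diagonal, so by continuity $\mathfrak{K}(z,w)\neq 0$ on $\Omega_0\times\Omega_0$ for a sufficiently small open $\Omega_0\subseteq\Omega$; there the reciprocal $\mathfrak{K}^{-1}$ is again holomorphic in $z$ and antiholomorphic in $w$, hence so is $\hat{K}$. Its restriction to the diagonal is precisely $\hat{K}(w,w)=\mathfrak{K}(w,w)^{-1}K(w,w)$, so the real-analytic matrix function $\big(\!\big(\frac{\partial^2}{\partial w_i\,\partial\bar{w}_j}\log(\mathfrak{K}(w,w)^{-1}K(w,w))\big)\!\big)$ appearing in the statement is exactly the curvature matrix of $\hat{K}$ to which Theorem \ref{main} refers. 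With this identification in place both implications reduce to the two halves of Theorem \ref{main}.

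For the forward implication, suppose $\M$ is infinitely divisible with respect to $\mathfrak{K}$; by definition $\hat{K}=\mathfrak{K}^{-1}K$ is then infinitely divisible on $\Omega_0$. The forward half of Theorem \ref{main} produces a (possibly smaller) domain on which $\big(\!\big(\frac{\partial^2}{\partial w_i\,\partial\bar{w}_j}\log\hat{K}\big)\!\big)$ is a positive definite function, which by the identification above is the asserted positive definiteness.

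The converse is where the only genuine work lies, and the step I expect to be the crux is an identifiability argument rather than the invocation of Theorem \ref{main} itself. Assuming the curvature matrix of $\log\hat{K}$ is positive definite, the converse half of Theorem \ref{main} furnishes a neighborhood $\Omega_0$ and an infinitely divisible kernel $K'$ on $\Omega_0\times\Omega_0$ with $K'(w,w)=\hat{K}(w,w)$ for all $w\in\Omega_0$. The obstacle is that Theorem \ref{main} guarantees only agreement on the diagonal, whereas infinite divisibility of $\M$ with respect to $\mathfrak{K}$ requires the full off-diagonal kernel $\hat{K}=\mathfrak{K}^{-1}K$ to be infinitely divisible. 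I would resolve this by uniqueness of Hermitian polarization: the kernel $K'$ built in Theorem \ref{main} is holomorphic in $z$ and antiholomorphic in $w$, as is $\hat{K}$, so writing $K'-\hat{K}=\sum d_{IJ}z^I\bar{w}^J$, the hypothesis $K'(w,w)=\hat{K}(w,w)$ gives $\sum d_{IJ}w^I\bar{w}^J\equiv 0$ on $\Omega_0$; the real-analytic linear independence of the monomials $w^I\bar{w}^J$ then forces every $d_{IJ}=0$, whence $K'=\hat{K}$ on $\Omega_0\times\Omega_0$. Consequently $\hat{K}=\mathfrak{K}^{-1}K$ is itself infinitely divisible, which is exactly the statement that $\M$ is infinitely divisible with respect to $\mathfrak{K}$, completing the proof.
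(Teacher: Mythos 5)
Your proposal is correct and follows the same route as the paper, which offers no separate argument for this theorem beyond declaring it an immediate application of Theorem \ref{main}. The one step you spell out --- upgrading the diagonal agreement $K'(w,w)=\hat K(w,w)$ furnished by the converse half of Theorem \ref{main} to the identity $K'=\hat K$ via uniqueness of the polarized (sesqui-holomorphic) extension --- is exactly the detail hidden in the word ``immediate,'' and it is valid because $\hat K=\mathfrak K^{-1}K$ is itself holomorphic in the first variable and anti-holomorphic in the second on a neighborhood of the diagonal, so agreement on the diagonal forces agreement everywhere.
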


If $K$ is a positive definite kernel on $\mathbb D$ such that
$(1-z\bar w)K(z,w)$ is infinitely divisible then we say that $M^*$
on $\mathcal{H}_K$ is a infinitely divisible contraction.

Here is an example showing that a contraction need not be
infinitely divisible. Take
 \begin{eqnarray*}K(z,w)&=&(1-z\bar w)^{-1}\big(1+z\bar
w+\tfrac{1}{4}z^2\bar{w}^2+\sum_{n=3}^{\infty}z^n\bar{w}^n\big)\\
&=& 1+2z\bar w+\sum_{n=2}^{\infty}(n+\tfrac{1}{4})z^n\bar{w}^n.
\end{eqnarray*}
Clearly $K$ defines the positive definite kernel on $\mathbb{D}$.
Since $(1-z\bar w)K(z,w)$ is also positive definite, it follows
that the adjoint of multiplication operator $M^*$ on
$\mathcal{H}_K$ is contractive. But
$$
\big((1-z\bar w)K(z,w)\big)^t = 1 + tz\bar w + \frac{t(2t -
1)}{4}z^2\bar w^2+\cdots
$$
is not positive definite for $t<\frac{1}{2}$ as was pointed out
earlier. Hence $M^*$ is not infinitely divisible contraction on
$\mathcal{H}_K$.

The following Corollary is a characterization of infinitely
divisible contractions in the Cowen-Douglas class $B_1(\mathbb D)$
completing the study begun in \cite{gm01}. Here, for two real
analytic functions $ G_1$ and $ G_2$ on a domain
$\Omega\subset\mathbb{C}^m$, $ G_1(w) \preceq
G_2(w),\,\,w\in\Omega,$ means $ G_2 -  G_1$ is a positive definite
function on $\Omega$.

\begin{cor}\label{mainc1}
Let $K$ be a positive definite kernel on the open unit disc $\mathbb D$.
Assume that the adjoint $M^*$ of the multiplication operator
$M$ on the reproducing kernel Hilbert space $\mathcal H_K$
belongs to $B_1(\mathbb D)$. The  function
$\frac{\partial^2}{\partial w\,\partial\bar{w}} \log \big ((1 -|
w|^2)K(w,w) \big )$ is positive definite, or equivalently
$$ \mathcal K_T(w) \preceq \mathcal K_{S^*}(w),\,\,w\in\mathbb D,$$
if and only if the
multiplication operator $M$ is an infinitely divisible contraction.
\end{cor}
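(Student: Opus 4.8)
The plan is to recognize the two conditions appearing in the hypothesis as the positive-definiteness of a single curvature function, namely that of the kernel $K^\ddag(z,w):=(1-z\bar w)K(z,w)$, and then to invoke both directions of Theorem \ref{main} applied to $K^\ddag$.

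First I would record the identity that makes the two stated forms of the hypothesis coincide. Splitting the logarithm gives
\[
\frac{\partial^2}{\partial w\,\partial\bar w}\log\big((1-|w|^2)K(w,w)\big)
= \frac{\partial^2}{\partial w\,\partial\bar w}\log(1-|w|^2)
+ \frac{\partial^2}{\partial w\,\partial\bar w}\log K(w,w).
\]
The first term on the right is $-\tfrac{1}{(1-|w|^2)^2}=\mathcal K_{S^*}(w)$ and the second is $-\mathcal K_T(w)$, since $T=M^*$ and $\mathcal K_{M^*}(w)=-\tfrac{\partial^2}{\partial w\,\partial\bar w}\log K(w,w)$ as in Remark \ref{second}. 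Hence the displayed function equals $\mathcal K_{S^*}(w)-\mathcal K_T(w)$, so its positive-definiteness is by definition precisely the relation $\mathcal K_T(w)\preceq\mathcal K_{S^*}(w)$. Moreover, because $\log K^\ddag(w,w)=\log(1-|w|^2)+\log K(w,w)$, this same function is identically $\tfrac{\partial^2}{\partial w\,\partial\bar w}\log K^\ddag(w,w)$, i.e.\ the (scalar, since $m=1$) curvature function attached to $K^\ddag$.

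With this identification the forward implication is immediate. If $M$ is an infinitely divisible contraction then, by definition, $K^\ddag$ is infinitely divisible, so the forward part of Theorem \ref{main} applied to $K^\ddag$ yields that $\tfrac{\partial^2}{\partial w\,\partial\bar w}\log K^\ddag(w,w)$ is a positive definite function on some $\Omega_0$, which is exactly the assertion to be proved.

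For the converse I would apply the second half of Theorem \ref{main} with the given function playing the role of the curvature of $\hat K:=K^\ddag$. This produces a neighborhood $\Omega_0$ and an infinitely divisible kernel $\kappa$ on $\Omega_0\times\Omega_0$ with $\kappa(w,w)=K^\ddag(w,w)$. The delicate point, which I expect to be the main obstacle, is that the theorem only matches $\kappa$ with $K^\ddag$ on the diagonal, whereas what is needed is that $K^\ddag$ itself be infinitely divisible. This gap is closed by uniqueness of polarization: both $\kappa$ and $K^\ddag(z,w)=(1-z\bar w)K(z,w)$ are holomorphic in $z$ and anti-holomorphic in $w$ (the factor $1-z\bar w$ preserves this structure), and a Hermitian real-analytic kernel is determined by its restriction to the diagonal. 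Since the two agree there, $\kappa=K^\ddag$ on $\Omega_0\times\Omega_0$, so $K^\ddag$ is infinitely divisible. As infinite divisibility forces positive-definiteness (take $t=1$), $M^*$ is in particular a contraction, and therefore $M$ is an infinitely divisible contraction, completing the argument.
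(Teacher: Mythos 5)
Your proposal is correct and follows essentially the same route as the paper: both directions come from applying Theorem \ref{main} to the kernel $K^\ddag(z,w)=(1-z\bar w)K(z,w)$, after identifying the displayed function with $\mathcal K_{S^*}(w)-\mathcal K_{M^*}(w)$, i.e., with the curvature function attached to $K^\ddag$. Your polarization-uniqueness step in the converse --- upgrading the diagonal equality $\kappa(w,w)=K^\ddag(w,w)$ furnished by Theorem \ref{main} to $\kappa=K^\ddag$ on $\Omega_0\times\Omega_0$, using that both are holomorphic in the first variable and anti-holomorphic in the second --- is precisely the detail left implicit in the paper's one-sentence proof (which quotes Theorem \ref{main} as if it directly yielded infinite divisibility of $K^\ddag$ itself), so it is a faithful completion of the same argument rather than a different one.
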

\begin{proof}
Recall Theorem \ref{main}, which says that the positive
definiteness of $$\frac{\partial^2}{\partial w\,\partial\bar{w}}
\log \big ((1 - |w|^2)K(w,w) \big )$$ is equivalent to infinite
divisibility of the kernel $(1 - z\bar w)K(z,w)$, that is, $\big
((1 - z\bar w)K(z,w)\big )^t$ is positive definite for all $t\geq
0$.
\end{proof}


We say that a commuting tuple of multiplication operators $\M$ is
an infinitely divisible row contraction if $(1-\langle z,w\rangle) K(z,w)$
is infinitely divisible, that is, $\big ((1-\langle z,w\rangle) K(z,w) \big )^t$
is positive definite for all $t>0$.

Recall that ${\boldsymbol R}_m^*$ is the adjoint of the joint weighted shift operator on the
Drury-Arveson space $H^2_m$. The
following theorem is a characterization of infinitely divisible
row contractions.

\begin{cor}\label{mainc2}
Let $K$ be a positive definite kernel on the Euclidean ball
$\mathbb B^m$. Assume that the adjoint $\M^*$ of the
multiplication operator $\M$ on the reproducing kernel Hilbert
space $\mathcal H_K$ belongs to $B_1(\mathbb B^m)$. The  function
$\big (\!\big (\frac{\partial^2}{\partial w_i\,\partial\bar{w_j}}
\log \big ((1 -\langle  w,  w \rangle)K( w, w) \big )\,\big
)\!\big )_{i,j=1}^m$, $ w\in \mathbb B^m,$ is positive definite,
or equivalently
$$ {\sf K}_{\M^*}( w) \preceq {\sf K}_{\boldsymbol R_m^*}( w),\,\, w\in\mathbb B^m,$$
if and only if the multiplication operator $\M$ is an infinitely
divisible row contraction.
\end{cor}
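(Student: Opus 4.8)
The plan is to obtain this multivariate characterization as a direct application of Theorem \ref{main}, mirroring the one-variable Corollary \ref{mainc1}: one simply replaces the scalar kernel $K$ there by the product kernel $\hat K(z,w):=(1-\langle z,w\rangle)K(z,w)$, which is again holomorphic in $z$ and anti-holomorphic in $w$, with $\hat K(w,w)=(1-\langle w,w\rangle)K(w,w)>0$ on $\mathbb B^m$, hence positive real analytic.

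First I would record the curvature identity behind the ``equivalently'' clause. Since $\M^*$ acts on $\mathcal H_K$, the norm-square of its section is $K(w,w)$, so ${\sf K}_{\M^*}(w)=\big(\!\big(-\tfrac{\partial^2}{\partial w_i\,\partial\bar{w}_j}\log K(w,w)\big)\!\big)_{i,j=1}^m$; while $\boldsymbol R_m^*$ carries the Drury--Arveson kernel $(1-\langle z,w\rangle)^{-1}$, giving ${\sf K}_{\boldsymbol R_m^*}(w)=\big(\!\big(\tfrac{\partial^2}{\partial w_i\,\partial\bar{w}_j}\log(1-\langle w,w\rangle)\big)\!\big)_{i,j=1}^m$. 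Because $\log$ sends the product to a sum, the mixed second derivatives add, so
$$
{\sf K}_{\boldsymbol R_m^*}(w)-{\sf K}_{\M^*}(w)=\Big(\!\Big(\tfrac{\partial^2}{\partial w_i\,\partial\bar{w}_j}\log\big((1-\langle w,w\rangle)K(w,w)\big)\Big)\!\Big)_{i,j=1}^m.
$$
By the definition of $\preceq$, the inequality ${\sf K}_{\M^*}(w)\preceq{\sf K}_{\boldsymbol R_m^*}(w)$ is exactly the positive definiteness of this matrix-valued function, so the two formulations in the statement coincide.

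Next I would run the equivalence through Theorem \ref{main} applied to $\hat K$. For the implication starting from infinite divisibility: if $\M$ is an infinitely divisible row contraction then $\hat K$ is infinitely divisible by definition (and in particular positive definite at $t=1$, so a genuine row contraction by Lemma \ref{c4lem}), and the forward half of Theorem \ref{main} gives that $\big(\!\big(\tfrac{\partial^2}{\partial w_i\,\partial\bar{w}_j}\log\hat K\big)\!\big)_{i,j=1}^m$ is positive definite on some $\Omega_0$. For the reverse implication: if this log-curvature matrix is positive definite, the converse half of Theorem \ref{main} produces an infinitely divisible kernel $K'$ on a neighbourhood $\Omega_0$ with $K'(w,w)=\hat K(w,w)$.

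The single step requiring more than a bare citation---and the place I expect to be the main obstacle---is to upgrade this diagonal agreement to an off-diagonal identity: $K'$ and $\hat K$ are both Hermitian functions (holomorphic in the first variable, anti-holomorphic in the second) that agree on the diagonal, so by uniqueness of polarization they agree everywhere, whence $\hat K$ itself is infinitely divisible and $\M$ is an infinitely divisible row contraction. With the curvature identity and this polarization argument in hand, the corollary follows; the genuine analytic content is carried entirely by Theorem \ref{main}.
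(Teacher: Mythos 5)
Your proposal is correct and takes essentially the same route as the paper, which (as with the one-variable Corollary \ref{mainc1}) proves this result simply by applying Theorem \ref{main} to the kernel $(1-\langle z,w\rangle)K(z,w)$ together with the additivity of $\frac{\partial^2}{\partial w_i\partial\bar w_j}\log(\cdot)$ over products. The only difference is that you spell out the step the paper leaves implicit --- upgrading the diagonal identity $K'(w,w)=\hat K(w,w)$ from the converse half of Theorem \ref{main} to equality off the diagonal via uniqueness of polarization for functions holomorphic in the first variable and anti-holomorphic in the second --- which is a genuine and correct clarification rather than a departure.
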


We give one last example, namely that of the polydisc $\mathbb
D^m$.  In this case, we say a commuting tuple $\M$ of multiplication
by the co-ordinate functions acting on
the Hilbert space $\mathcal H_K$ is infinitely divisible if $\big
(S( z,w) K( z, w) \big )^t$, where $S( z, w):= \prod_{i=1}^m
(1-z_i\bar{w}_i)^{-1},\,  z, w\in \mathbb D^m$, is positive
definite for all $t>0$.  Every commuting tuple of contractions
$\M^*$ need not be infinitely divisible. Let $\boldsymbol S_m$ be the
commuting $m$ - tuple of the joint weighted shift defined on the
Hardy space $H^2(\mathbb D^m$).

\begin{cor}\label{mainc3}
Let $K$ be a positive definite kernel on the polydisc $\mathbb
D^m$. Assume that the adjoint $\M^*$ of the multiplication
operator $\M$ on the reproducing kernel Hilbert space
$\mathcal H_K$ belongs to $B_1(\mathbb D^m)$. The  function $\big
(\!\big (\frac{\partial^2}{\partial w_i\,\partial\bar{w_j}} \log
\big (S(z, w) K(w,w) \big )\,\big )\!\big )_{i,j=1}^m$, $w\in
\mathbb D^m,$ is positive definite, or equivalently
$$ {\sf K}_{\M^*}(w) \preceq {\sf K}_{\boldsymbol S_m^*}(w),\,\, w\in\mathbb D^m,$$
if and only if the multiplication operator $\M$ is an infinitely
divisible $m$-tuple of contractions.
\end{cor}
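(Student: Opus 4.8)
The plan is to obtain this as a direct corollary of Theorem \ref{main}, in exactly the way Corollaries \ref{mainc1} and \ref{mainc2} were deduced for the disc and the ball; the only change is that the extremal model is now $\boldsymbol S_m^*$, the adjoint joint shift on the Hardy space $H^2(\mathbb D^m)$, whose reproducing kernel is the Szeg\H{o} kernel $S(z,w)=\prod_{i=1}^m(1-z_i\bar w_i)^{-1}$.

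First I would compute the curvature of the model. Since $\log S(w,w)=-\sum_{k=1}^m\log(1-|w_k|^2)$, differentiation gives $\frac{\partial^2}{\partial w_i\,\partial\bar w_j}\log S(w,w)=\delta_{ij}(1-|w_i|^2)^{-2}$, so that ${\sf K}_{\boldsymbol S_m^*}(w)$ is the diagonal matrix with entries $-(1-|w_i|^2)^{-2}$. Writing $K$ for the reproducing kernel of $\mathcal H_K$, this yields the identity
\[
{\sf K}_{\boldsymbol S_m^*}(w)-{\sf K}_{\M^*}(w)
=\Big(\!\Big(\tfrac{\partial^2}{\partial w_i\,\partial\bar w_j}
\log\big(\textstyle\prod_{k=1}^m(1-|w_k|^2)\,K(w,w)\big)\Big)\!\Big)_{i,j=1}^m ,
\]
the Hessian of $\log$ of the product of $K$ with the inverse Szeg\H{o} kernel $S^{-1}$. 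This identification establishes the ``equivalently'' clause: the curvature inequality ${\sf K}_{\M^*}(w)\preceq{\sf K}_{\boldsymbol S_m^*}(w)$ holds precisely when this Hessian matrix is a positive definite function on $\mathbb D^m$.

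With this in hand I would apply Theorem \ref{main} to $\hat K(w,w):=\prod_{k=1}^m(1-|w_k|^2)\,K(w,w)=\big(S^{-1}K\big)(w,w)$. For the reverse direction, if $\M$ is an infinitely divisible $m$-tuple of contractions then $\big(S(z,w)^{-1}K(z,w)\big)^t$ is positive definite for every $t>0$, and the forward half of Theorem \ref{main} produces a subdomain $\Omega_0\subseteq\mathbb D^m$ on which the above Hessian matrix is positive definite. For the forward direction, assuming that the Hessian is a positive definite function, the converse half of Theorem \ref{main} yields, on some neighborhood $\Omega_0$, an infinitely divisible kernel whose restriction to the diagonal equals $(S^{-1}K)(w,w)$; since both the curvature and infinite divisibility are governed by these diagonal values together with the pluriharmonic coboundary freedom already exploited in the proof of Theorem \ref{main}, it follows that $S^{-1}K$ is infinitely divisible, that is, $\M$ is an infinitely divisible $m$-tuple of contractions.

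Since Theorem \ref{main} does all the analytic work, I do not anticipate a real obstacle beyond careful bookkeeping. The one point demanding attention---and where the polydisc differs cosmetically from the disc and the ball---is the sign and the choice of model factor: one must divide out the \emph{inverse} Szeg\H{o} kernel $\prod_{k}(1-z_k\bar w_k)$ (equivalently, compare against $\boldsymbol S_m^*$ rather than against the Hardy kernel itself), so that the Hessian in the statement is that of $\log(S^{-1}K)$ and the direction of the inequality $\preceq$ comes out correctly.
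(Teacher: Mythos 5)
Your proposal is correct and takes exactly the route the paper intends: Corollary \ref{mainc3} is stated there without a separate argument, being (like Corollaries \ref{mainc1} and \ref{mainc2}) an immediate application of Theorem \ref{main} to the kernel $\prod_{k=1}^m(1-z_k\bar w_k)\,K(z,w)$, which is precisely what you carry out. Your two points of extra care are also well taken: with $S(z,w)=\prod_{k=1}^m(1-z_k\bar w_k)^{-1}$ the Hessian in the statement must indeed be that of $\log\big(S(w,w)^{-1}K(w,w)\big)$ (the printed ``$S(z,w)K(w,w)$'' is a slip, as comparison with the disc and ball cases shows), and your identification, in the forward direction, of the infinitely divisible kernel produced by Theorem \ref{main} with $S^{-1}K$ itself is legitimate because a function holomorphic in the first variable and anti-holomorphic in the second is uniquely determined by its restriction to the diagonal.
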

 For a second application of these ideas, assume that $K$ is a
positive definite kernel on $\mathbb{D}^m$ with the property:
$$K_i(z,w)= (1-z_i\bar{w}_i)^m K(z,w),\, 1\leq i \leq m,$$ is
infinitely divisible. Then
$$K^m(z,w)=\big(\prod_{i=1}^{m}(1-z_i\bar{w}_i)\big)^{-m}\prod_{i=1}^{m}K_i(z,w).$$
It now follows that
$$K(z,w)= S(z,w)\big(\prod_{i=1}^{m}K_i(z,w)\big)^{\frac{1}{m}}.$$
Let $\M$ be the commuting tuple of multiplication operators on the Hilbert space
$\mathcal H_K$, which is contractive since $K$ admits the S\"{z}ego kernel $S$ as a factor.
Clearly, the infinite divisibility of $K_i$, $1\leq i \leq m$, implies that
$\big(\prod_{i=1}^{m}K_i(z,w)\big)^{\frac{1}{m}}$ is positive definite.
As pointed out in \cite{dms}, in consequence, for any polynomial $p$ in $m$ - variables,
$$
 p(M_1, \ldots , M_m)= P_{\mathcal S}p(S_m)_{| \mathcal S},
$$
where $\mathcal S$ is the invariant subspace of functions vanishing on the diagonal
of the Hilbert space $H^2_m \otimes \mathcal H_K\subseteq \mathcal O(\mathbb D^m \times \mathbb D^m)$
and $P_\mathcal S$ is the projection onto the subspace $\mathcal S$.

A basic question raised in the paper of Cowen and Douglas
\cite[Section 4]{cd} is the determination of non-degenerate
holomorphic curves in the Grassmannian. Clearly, a necessary
condition for this is the positive definiteness of the curvature
function.  Thus we have the following corollary to Theorem
\ref{main}.
\begin{cor}
Let $E$ be a holomorphic Hermitian  vector bundle of rank $1$ over a
bounded domain $\Omega \subset \mathbb C^m$. If the curvature
${\sf K}$ of $E$ is negative definite, then there exists a
Hilbert space $\mathcal{H}$ and a holomorphic map $\gamma:\Omega_0
\to\mathcal{H}$, $\Omega_0$ open in $\Omega$, such that $E$  is
isomorphic to $E_{\gamma}$, where $E_{\gamma}$ is the pullback, by
the holomorphic map
$\gamma:\Omega_0\to\mathcal{G}r(1,\mathcal{H})$, of the
tautological bundle defined over $\mathcal{G}r(1,\mathcal{H})$.
Moreover, the real analytic function $\langle \gamma (z),\gamma
(w)\rangle$ defined on $\Omega_0\times\Omega_0$ is infinitely
divisible.
\end{cor}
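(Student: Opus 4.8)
The plan is to recognise the hypothesis as exactly the input demanded by the converse half of Theorem \ref{main}, and then to convert the infinitely divisible kernel it produces into the desired holomorphic map by the standard Gram realisation of a positive definite kernel. First I would work in a local holomorphic frame. Since $E$ has rank one it admits a nowhere vanishing holomorphic frame $\gamma_0$ on some neighbourhood $\Omega_0$ of a fixed point $w_0$, and I set $\hat K(w,w) := \|\gamma_0(w)\|^2$, a positive real analytic function whose polarisation $\hat K(z,w)$ is holomorphic in $z$ and anti-holomorphic in $w$. In this frame the curvature is ${\sf K}(w) = \big(\!\big(-\tfrac{\partial^2}{\partial w_i\,\partial\bar w_j}\log \hat K(w,w)\big)\!\big)_{i,j=1}^m$, so, under the convention fixed just before Theorem \ref{main}, the assumption that ${\sf K}$ is negative definite means precisely that $\big(\!\big(\tfrac{\partial^2}{\partial w_i\,\partial\bar w_j}\log \hat K\big)\!\big)_{i,j=1}^m$ is a positive definite function. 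This is the hypothesis of the converse direction of Theorem \ref{main}, which I would invoke to obtain, after shrinking $\Omega_0$, an infinitely divisible positive definite kernel $K$ on $\Omega_0\times\Omega_0$ with $K(w,w) = \hat K(w,w)$.

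Next I would realise $K$ as the Gram kernel of a holomorphic map. Translating so that $w_0 = 0$ and expanding $K(z,w) = \sum_{I,J} a_{IJ}\,z^I\bar w^J$, the positive definiteness of $K$ is, by \cite[Lemma 4.1 and 4.3]{cs}, equivalent to the positivity of the coefficient matrix $\big(\!\big(a_{IJ}\big)\!\big)$; hence there are a Hilbert space $\mathcal H$ and vectors $\{v_I\}$ in $\mathcal H$ with $a_{IJ} = \langle v_I, v_J\rangle_{\mathcal H}$. Setting $\gamma(z) := \sum_I z^I v_I$ then gives a holomorphic map $\gamma:\Omega_0\to\mathcal H$ (the series converges on $\Omega_0$ since $\|\gamma(z)\|^2 = K(z,z)$) with $\langle \gamma(z),\gamma(w)\rangle = K(z,w)$. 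In particular $\|\gamma(w)\|^2 = K(w,w) = \hat K(w,w) > 0$, so $\gamma$ is nowhere vanishing and $w\mapsto [\gamma(w)]$ is a holomorphic map $\Omega_0\to\mathcal{G}r(1,\mathcal H)$. The pullback $E_\gamma$ of the tautological bundle then carries $\gamma$ as a holomorphic frame, with the Hermitian metric induced from $\mathcal H$, so that $\|\gamma(w)\|^2_{E_\gamma} = \hat K(w,w)$.

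It remains to identify $E$ with $E_\gamma$ and to read off the last assertion. The bundle map that sends the frame vector $\gamma_0(w)$ to $\gamma(w)$ is holomorphic, because both are holomorphic frames and the transition function between them is the constant $1$, and it is fibrewise isometric because $\|\gamma_0(w)\|^2 = \hat K(w,w) = \|\gamma(w)\|^2$; it is therefore an isomorphism of \emph{Hermitian} holomorphic line bundles, whence $E\cong E_\gamma$. Finally, the real analytic function $\langle\gamma(z),\gamma(w)\rangle$ equals $K(z,w)$, which is infinitely divisible by the choice made in the first step; this is the ``moreover'' clause.

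The substantive analytic content---the passage from negativity of the curvature to the existence of an infinitely divisible kernel with the prescribed diagonal---is entirely supplied by Theorem \ref{main}, so this Corollary is in essence an assembly of standard constructions. The only real friction I anticipate is bookkeeping: keeping the holomorphic/anti-holomorphic roles straight in the Gram realisation $\gamma(z)=\sum_I z^I v_I$ so that $\langle\gamma(z),\gamma(w)\rangle$ reproduces $K(z,w)$ and not $\overline{K(z,w)}$ or $K(w,z)$, and checking that the induced map into $\mathcal{G}r(1,\mathcal H)$ together with the pullback metric recovers $E$ as a Hermitian, and not merely a holomorphic, bundle. I note that, by uniqueness of polarisation, the kernel $K$ furnished by Theorem \ref{main} must in fact coincide with $\hat K$ itself, which streamlines this bookkeeping though it is not strictly needed.
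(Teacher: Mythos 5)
Your proposal is correct and follows exactly the route the paper intends: the paper offers no separate argument for this corollary beyond the phrase ``corollary to Theorem \ref{main}'', and your proof is precisely that assembly --- the converse direction of Theorem \ref{main} applied to the polarized metric $\hat K$, followed by the standard Gram realization of the resulting infinitely divisible kernel as $\langle\gamma(z),\gamma(w)\rangle$ for a holomorphic curve $\gamma$ into a Hilbert space, which identifies $E|_{\Omega_0}$ with the pullback of the tautological bundle. Your closing observation that uniqueness of the sesqui-holomorphic extension forces the kernel produced by Theorem \ref{main} to coincide with the polarization of the metric itself is also correct, and it cleanly justifies the ``moreover'' clause.
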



We finish with an amusing application of the Lemma 1.7 in \cite{ps} which is
a key ingredient in the proof of Theorem \ref{main}. Let $K$ be the function on the unit ball
$\mathbb B_{2\times 2}$ (with respect to the operator norm) of $2\times 2$ matrices, given by the
formula $K(Z,W):= \det (I - ZW^*)^{-1}$, $Z,W \in \mathbb B_{2\times 2}$.

 The kernel $K$ is normalized at $0$ by definition.
For $\delta = (1,0,0,3)$, the matrix
$$\big{(}\!\big(\frac{\partial^\alpha\bar\partial^\beta\log K(0,0)}{\alpha !\beta
!}\big)\!\big{)}_{0\leq\alpha,\beta\leq\delta}$$ is diagonal with
$\frac{\partial_1\partial_4^3\bar\partial_1\bar\partial_4^3\log
K(0,0)}{3!3!} = -1<0$ (in fact for $|\delta|\leq 3$, the
corresponding matrix is diagonal with non-negative entries). Here,
$\delta\geq \mu$ if and only if $\delta_i\geq\mu_i$ for all
$i\in\{1,\ldots,m\}$ and the matrix is written with respect to the
colexicographic ordering. From \cite[Lemma 4.1 and 4.3]{cs}, it
follows that $\log K$ is not positive definite. Hence Theorem
\ref{main} shows that the function $\det (I - ZW^*)^{-t}$ cannot
be positive definite for all $t>0$. Of course, a lot more is
known. Indeed, the set $$\{0 < t: K(Z,W)^t \mbox{ is positive
definite}\}$$ is explicitly determined in \cite[Corollary
4.6]{arazy}.

\bibliographystyle{amsplain}
\bibliography{bibliography}
\end{document}